\documentclass{article}
%%%%%%%%%%%%%%%%%%%%%%%%%%%%%%%%%%%%%%%%%%%%%%%%%%%%%%%%%%%%%%%%%%%%%%%%%%%%%%%%%%%%%%%%%%%%%%%%%%%%%%%%%%%%%%%%%%%%%%%%%%%%%%%%%%%%%%%%%%%%%%%%%%%%%%%%%%%%%%%%%%%%%%%%%%%%%%%%%%%%%%%%%%%%%%%%%%%%%%%%%%%%%%%%%%%%%%%%%%%%%%%%%%%%%%%%%%%%%%%%%%%%%%%%%%%%
\usepackage{amssymb}
\usepackage{amsfonts}
\usepackage{amsmath}
\usepackage{color}

\setcounter{MaxMatrixCols}{10}
%TCIDATA{OutputFilter=LATEX.DLL}
%TCIDATA{Version=5.50.0.2890}
%TCIDATA{<META NAME="SaveForMode" CONTENT="1">}
%TCIDATA{BibliographyScheme=Manual}
%TCIDATA{Created=Friday, April 19, 2024 21:02:03}
%TCIDATA{LastRevised=Friday, May 17, 2024 14:01:47}
%TCIDATA{<META NAME="GraphicsSave" CONTENT="32">}
%TCIDATA{<META NAME="DocumentShell" CONTENT="Standard LaTeX\Blank - Standard LaTeX Article">}
%TCIDATA{Language=American English}
%TCIDATA{CSTFile=40 LaTeX article.cst}

\newtheorem{theorem}{Theorem}

\newtheorem{corollary}[theorem]{Corollary}

\newtheorem{definition}[theorem]{Definition}

\newtheorem{lemma}[theorem]{Lemma}

\newtheorem{proposition}[theorem]{Proposition}
\newtheorem{remark}[theorem]{Remark}

\newenvironment{proof}[1][Proof]{\noindent\textbf{#1.} }{\ \rule{0.5em}{0.5em}}

\begin{document}

\author{Adriano Da Silva\and
%EndAName
Departamento de Matematica, Universidad de Tarapac\'a, \\
Iquique, Chile\\
Email: adasilva@academicos.uta.cl \and Ey\"{u}p Kizil \and Department of
Mathematics, Yildiz Technical University, \\
%EndAName
34220 Istanbul, T\"{u}rkiye\\
Email: kizil@yildiz.edu.tr \and Victor Ayala \\
%EndAName
Department of Mathematics, University of Tarapac\'a\\
Arica, Chile\\
Email: vayala@academicos.uta.cl}
\title{Linear control systems on a 4D solvable Lie group used to model primary visual cortex $V1$}
\maketitle

\begin{abstract}
In this article, we study  linear control systems on a 4-dimensional solvable Lie group. Our motivation stems from the model introduced in \cite{baspinar}, which presents a precise geometric framework in which the primary visual cortex \( V1 \) is interpreted as a fiber bundle over the retinal plane \( M \) (identified with \( \mathbb{R}^{2} \)), with orientation \( \theta \in S^{1} \), spatial frequency \( \omega \in \mathbb{R}^{+} \), and phase \( \phi \in S^{1} \) as intrinsic parameters. For each fixed frequency \( \omega \), this model defines a Lie group \( G(\omega) = \mathbb{R}^{2} \times S^{1} \times S^{1} \), which we adopt in this work as the state space group \( G \) of our linear control system. We also present new results concerning controllability and characterize the control sets associated with this class of systems.

\end{abstract}

{\small {\bf Keywords:} Controllability, control sets, Lie groups} 
	
{\small {\bf Mathematics Subject Classification (2020): 93B05, 93C05, 83C40.}}%
\section{Introduction}

The primary visual cortex, often called $V1$, is the primary cortical region
of the brain that receives, integrates, and processes visual information
relayed from the retinas. Different parts of the cerebral cortex (e.g. V1, V2, V3, V4, etc) analyze different features. Among them, V1 consists of neurons (i.e., simple cells) which
are locally sensitive to visual features such as orientation, spatial
frequency, phase etc. These simple cells of $V1$ are aligned in
hypercolumnar structure which was first observed in \cite{Hubel-Wiesel} and
further modelled via differential geometry by Hoffmann \cite{Hoffmann} as a
fiber bundle. Indeed, to each point of the visual or retinal plane $M$ a hypercolumn is assigned.

There are several models in the literature such that the visual cortex was
modelled as a Lie group. For example, in \cite{CittiSarti} a sub--Riemannian
structure on the Lie group $SE(2)=SO(2)\rtimes \mathbb{R}^{2}=S^{1}\times\mathbb{R}
^{2} $ of 2D proper motions provides a precise geometrical model
for the functional architecture of $V1$. In this model, Citti-Sarti
interpret the cortex as a fiber bundle over the retinal plane with only
orientation as an intrinsic variable. Note that appropriate interpretations
of $V1$ have been also treated in the literature with several requirements.
We constrain in this paper our attention to a recent model framework provided in \cite{baspinar} which generalizes that of Citti-Sarti in the sense that the cortex is interpreted as a fiber bundle over a 2-dimensional retinal plane where, in addition to orientation, spatial frequency and phase are introduced as extra intrinsic variables. The model provides at each frequency a 5-dimensional sub-Riemannian manifold $\mathcal{M}= S^{1}\times M\times 
\mathbb{R}^{+}\times S^{1}$ where $M$ stands for the retinal plane usually identified with $\mathbb{R}^{2}$. Note that the coordinate variables are $(\theta, x,y,\omega, s)$ where $\theta$ means orientation, $\omega \in \mathbb{R}^{+}$ a (spatial) frequency and $s$ spatial phase which takes values on $[0,2\pi]$. Hence, if $(x_0,y_0)$ and $\phi$ are given as fixed parameter values we should understand they are reference spatial position and phase values. At each fixed frequency $\omega $ the model provides
a Lie group 
$$G(\omega )=S^1\times \mathbb{R}^{2}\times S^{1}\simeq S^1\times \mathbb{R}^{2}\times\{\omega\}\times S^{1},$$ with a Lie
algebra generated by horizontal vector fields coming from the distribution
associated to the sub-Riemannian structure. Each point $(\theta, x,y,\phi)$ in $G(\omega )$ can be thought of as representing the preferred stimulus of a simple cell in $V1$.
\par It should be noted that $G(\omega )$ is a Lie group only if we fix $\omega \in \mathbb{R}^{+}$ in $\mathcal{M}
$ and it does not mean that $G(\omega)$ depends on $\omega $. The parameter $\omega $ is of course important for visual features but we study here only dynamics evolving on the groups $G(\omega)$ for each $\omega $ and not the variable $\omega$ itself. Since it is also necessary to distinguish any Lie group from the Lie group used to model $V1$ we will simply use in the rest of the exposition $G$ rather than the notation $G(\omega)$ and $L$ for any other Lie group. Analogously we fix the notation $\mathfrak{g}$ and $\mathfrak{l}$ to denote their corresponding Lie algebras.

Hence, we focus our interest on the Lie group $G= S^{1}\rtimes\mathbb{R}
^{2}\times S^{1}$ as the state space of linear control
systems. As we will show in the next section
this is a nonnilpotent solvable Lie group since descending sequence of its derived subalgabras
goes down to zero. Note that there exists a particular class between nilpotent and solvable Lie groups, which is the class of completely solvable ones. In our case, the group $G$ fails to be completely solvable since one of the
inner derivations admits complex eigenvalues.
\par We would like to stress in short the importance of solvable Lie groups (resp. Lie algebras) since the Lie group on which we consider our control systems belongs to the class of such Lie groups.  For one thing, solvable Lie algebras are exactly those that can be derived from semi-direct products. Moreover, they can be represented as triangular matrices. So, they are the most convenient Lie algebras to compute several mathematical objects and in particular their exponential might be easily determined.  On the other hand, any finite dimensional real Lie algebra $\mathfrak {l}$ can be expressed as the semi-direct product of a solvable ideal and a semi-simple (Lie)subalgebra. Thus, here is the departure point for more general results on arbitrary 4D Lie algebras. Note that the larger the Abelian component of a Lie algebra $\mathfrak {l}$, the more complex the algebra $\mathrm {Der}(\mathfrak{l})$ of derivations becomes.

The paper is divided into 4 sections. In Section 2, we introduce briefly the sub-Riemannian model of the primary visual cortex $V1$ which serves as the fundamental source for our motivation to consider linear control systems on the solvable Lie group $G$ used to model $V1$. In section 3, we start with emphasizing that the group $G$ is a pure solvable Lie group in the sense that it is neither nilpotent nor completely solvable and then  mainly constrain our attention to its factors since $G$ is the direct product of the Lie group $SE(2)$ of 2D proper motions of the plane and $S^{1}$. We also determine explicitly both derivations and automorphisms of the Lie algebra of $G$ to obtain linear vector fields and hence consider a linear control system whose state space is the group $G$. In the
last Section 4, we study controllability of a linear control system and give some concrete results. Controllability in the visual cortex model means how integral curves (parametrized with
different control inputs) transfer a neural activity that starts spreading from a certain
initial state in the brain to neurons in more distant states. We also determine control sets for this class of dynamics evolving on $G$.

\section{Set up}

We introduce very basic knowledge about the sub-Riemannian model of the primary visual cortex $V1$ and fix a set up for the rest of the manuscript. In particular, we specify our state space $G$ which is a Lie group
used for functional architecture of $V1$ at each (fixed) frequency $\omega $. Recall that by a sub-Riemannian manifold we
mean a $n$-dimensional differentiable real manifold $N$ equipped with a
nonintegrable distribution $\Delta$ of $\mathrm{rank}(\Delta)=$ $%
k<n$ and a sub-Riemannian metric $(g_{ij})_{p}:\Delta_{p}\times 
\Delta_{p}\rightarrow \mathbb{R}$ applied to vectors on subspaces $%
\Delta_{p}\subset T_{p}N$ at each $p\in N$. The distribution $\Delta\subset TN$ on the sub-Riemannian manifold $N$ is called horizontal
distribution and its (smooth) sections are called horizontal vector fields.
It follows by the well-known Chow Theorem that such a distribution $\Delta$ recovers the sub-Riemannian manifold $N$ (i.e. the ambient space) if the smallest Lie algebra $%
Lie(X_{1},\ldots ,X_{k})$ generated by the horizontal vector fields $%
X_{1},\ldots ,X_{k}$ is the whole tangent space $T_{p}N$ at each point $p\in
N$, that is, $Lie(X_{1},\ldots ,X_{k})(p)=T_{p}N$.

We start listing from \cite{baspinar} horizontal vector fields of the
5-dimensional sub-Riemannian manifold $\mathcal{M}=S^{1}\times\mathbb{R}^{2}\times
 \mathbb{R}^{+}\times S^{1}$ :%
\begin{eqnarray}
X_{1} &=&(\cos \theta )\partial /\partial x+(\sin \theta )\partial /\partial
y \\
X_{2} &=&\partial /\partial \theta \\
X_{3} &=&-(\sin \theta )\partial /\partial x+(\cos \theta )\partial
/\partial y+\omega \partial /\partial s \\
X_{4} &=&\partial /\partial \omega \text{.}
\end{eqnarray}%
These vector fields provide directions in which neural signals can propagate. Note that the variable $s$ stands for spatial phase. A straightforward computation involving partial differential operators implies
that 
\begin{eqnarray*}
\lbrack X_{1},X_{2}] &=&(\sin \theta )\partial /\partial x-(\cos \theta
)\partial /\partial y \\
\lbrack X_{2},X_{3}] &=&-(\cos \theta )\partial /\partial x-(\sin \theta
)\partial /\partial y \\
\lbrack X_{3},X_{4}] &=&-\partial /\partial s\text{.}
\end{eqnarray*}%
Note that $[X_{2},X_{3}]=-X_{1}$ and the rest of the brackets are all
trivial (i.e., $[X_{1},X_{3}]=[X_{1},X_{4}]=[X_{2},X_{4}]=0$). 

\par Let $T_{p}%
\mathcal{M}$ denotes the tangent space of $\mathcal{M}$ at $p\in \mathcal{M}$
and $V$ the vector space spanned by the vector fields $%
X_{1},X_{2},X_{3},X_{4}$ and $[X_{1},X_{2}]$. Hence $V$ might be interpreted as the space of directions along which neurons are connected in V1.
We mean by $V(p)\subset T_{p}%
\mathcal{M}$
the subspace obtained by evaluating the members of $V$ at $p$ and denote by $%
Lie(V)$ the smallest Lie algebra containing $V$. It follows that $T_{p}%
\mathcal{M}=Lie(V)$ which means the vector fields in $V$ are bracket
generating. As a matter of fact, the horizontal vector fields $%
X_{1},X_{2},X_{3},X_{4}$ and $[X_{1},X_{2}]$ provide the (long-range) connectivity between any two points of $\mathcal{M}$ via their integral curves (i.e. trajectories) defined on $\mathcal{M}$. 
Explicitly speaking, connectivity in the visual cortex is about how neurons and brain regions communicate to build a complete understanding of what we are seeing from detecting light and edges to recognizing objects and faces.
Note that by trajectory of a horizontal vector field we understand a curve $\gamma :[0,T]\rightarrow \mathcal{M}$ in $\mathcal{M}$
starting at the initial condition $\gamma (0)=(\theta
_{0}, v_{0},\omega _{0},\phi _{0})$ with $v_0=(x_0,y_0)$ such that $\gamma ^{\prime }(t)\in V(\gamma (t))$ at 
$\gamma (t)=(\theta (t), v(t),\omega (t),\phi (t))\in \mathcal{M}$.
\par We remember a bit from the terminology about receptive profiles of simple cells that form V1. It is known \cite {baspinar} that the hypercolumnar architecture of V1 is interpreted as a fiber bundle on the retinal plane. And a receptive profile of a simple cell depends on the position $v=(x,y)$ in the retinal plane $M$ and on the selected visual features $p=(\theta ,\omega ,\phi )$ such as orientation, frequency, phase, etc. The retina is the light-sensitive layer at the back of the eye and contains photoreceptors that convert light into electrical signals.
Moreover, the map defined in \cite {baspinar} by 
\begin{equation*}
A_{(\theta, v ,\phi )}:\left( 
\begin{array}{c}
\overline{x} \\ 
\overline{y} \\ 
\overline{s}%
\end{array}%
\right) \longmapsto \left( 
\begin{array}{c}
x \\ 
y \\ 
s%
\end{array}%
\right) =\left( 
\begin{array}{c}
x_{0} \\ 
y_{0} \\ 
\phi%
\end{array}%
\right) +\left( 
\begin{array}{ccc}
\cos \theta & -\sin \theta & 0 \\ 
\sin \theta & \cos \theta & 0 \\ 
0 & 0 & 1%
\end{array}%
\right) \left( 
\begin{array}{c}
\overline{x} \\ 
\overline{y} \\ 
\overline{s}%
\end{array}%
\right)
\end{equation*}%
is used to induce the Lie group
\begin{equation*}
G\simeq \{A_{(\theta, v, \phi )}:(\theta, v ,\phi
)\in S^{1}\times M\times  S^{1}\}
\end{equation*}%
by fixing $\omega$. The group operation is defined as follows:
\begin{equation} \label{grpmultip}
g_{1}g_{2}=\left(\theta
_{1}+\theta _{2}, v_1+R(\theta_1)v_2, \phi _{1}+\phi _{2}\right)
\end{equation}%
where $R (\theta )$ means the counter-clockwise rotation matrix by the angle $\theta $, and $g_{1}=((\theta _{1}, v_1),\phi _{1}),g_{2}=((\theta_2, v_2),\phi
_{2})\in G$ with $v_1=(x_{1},y_{1})$, $v_{2}=(x_{2},y_{2})$.

\begin{remark}
We note that the horizontal vector fields are left-invariant vector fields defined on $G$ so that one might consider a left-invariant dynamics on the group. However, we restrict our attention in this exposition to a wide class of control systems on $G$, namely, linear control systems that we formally define in Section \ref {secLCS}.
  
\end{remark}

\section{The group $G=SE(2)\times S^{1}$ and its Lie algebra }
The following standard notions (and more) from the theory of Lie algebras might be encountered in any textbook on Lie groups and their Lie algebras. We refer the reader for instance to a recent one \cite {Smartin}.
\begin{definition}
Let $\mathfrak{l}$ be a Lie algebra. By a Lie algebra derivation $\mathcal{D} $ of $\mathfrak{l}\,$ we mean a
linear mapping $\mathcal{D}:\mathfrak{l}\,\rightarrow \mathfrak{l}\,$ such that $%
\mathcal{D}[X,Y]=[\mathcal{D}X,Y]+[X,\mathcal{D}Y]$ for every $X,Y\in \mathfrak{l}$. 
We say that a linear mapping $\varphi:\mathfrak{l}\,\rightarrow \mathfrak{l}\,$ is an automorphism of $\mathfrak{l}$ if
$\varphi[X,Y]=[\varphi X,\varphi Y]$ for every $X,Y\in \mathfrak{l}$.
\end{definition}
In particular, the linear mapping $%
ad(X):\mathfrak{l}\rightarrow \mathfrak{l}$  defined by $%
ad(X)(Y)=[X,Y]$ for every $Y$ $\in \mathfrak{l}$ is called \emph{inner
derivation} of $\mathfrak{l}$.
As usual, we denote
by $\mathrm {Der}(\mathfrak{l})$ the set of all Lie algebra derivations of $\mathfrak{l}$ and by $\mathrm {Aut}(\mathfrak {l})$, the set of all Lie algebra automorphisms of $\mathfrak{l}$. It is known \cite {Smartin} that $\mathrm {Der}(\mathfrak{l})$ is the Lie algebra of the Lie group $\mathrm {Aut}(\mathfrak {l})$.

\begin{definition}
A Lie algebra $\mathfrak{l}$ is said to be 
\emph{solvable} if the descending sequence of derived subalgebras $\mathfrak{%
l}^{(k)}$ of $\mathfrak{l}$ goes down to zero for some integer $k$:%
\begin{equation*}
\mathfrak{l}\supset \mathfrak{l}^{(1)}\supset \mathfrak{l}^{(2)}\supset
\cdots \supset \mathfrak{l}^{(k)}={0}
\end{equation*}%
where $\mathfrak{l}^{(0)}=\mathfrak{l}$ and $\mathfrak{l}^{(k)}=[\mathfrak{l}%
^{(k-1)},\mathfrak{l}^{(k-1)}]$ for $k\geq 1$.
\end{definition}

 We say that a Lie group $L$ is solvable
if its Lie algebra $\mathfrak{l}$ is solvable. 
Remember that $L$ is said to be nilpotent if all iterated Lie brackets beyond some fixed uniform number iterations vanishes. Nilpotent Lie algebras form a subclass of solvable Lie algebras. However, there is an intermediate
class of Lie algebras between nilpotent and solvable ones which are
called \emph{completely solvable}. A Lie algebra $\mathfrak{l}$ is
completely solvable if $ad(X)$ for every $X\in \mathfrak{l}$ has only real eigenvalues.
\par We note that since the group $G$ of visual cortex  will be set as the state space of our dynamics later, it might help for further purposes to reveal some of its Lie group characteristic properties. Hence we find it convenient to start with the following elementary result.
\begin{proposition}
The Lie algebra $\mathfrak{g}$ of $G$ is spanned by the
vector fields $X_{1},X_{2},X_{3}$ and $[X_{1},X_{2}]$. It is non-nilpotent
solvable and not completely solvable Lie algebra.
\end{proposition}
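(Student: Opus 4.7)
The plan is to exhibit a basis of $\mathfrak{g}$, compute all brackets among the basis elements, and then read off the three claimed properties (solvable, non-nilpotent, not completely solvable) directly from the resulting structure constants.

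First I would verify that $\{X_1, X_2, X_3, Y\}$, where $Y := [X_1, X_2]$, is a basis of $\mathfrak{g}$. Since $\dim G = \dim(S^1 \times \mathbb{R}^2 \times S^1) = 4$, it suffices to check linear independence at a single point; at $\theta = 0$ (with $\omega$ fixed) these become $\partial_x$, $\partial_\theta$, $\partial_y + \omega\,\partial_s$, $-\partial_y$, which manifestly span $T_e G$. Next I would complete the bracket table. Beyond the already-computed $[X_1, X_2] = Y$ and $[X_2, X_3] = -X_1$, I still need $[X_1, X_3]$, $[X_1, Y]$, $[X_2, Y]$, and $[X_3, Y]$. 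The structural observation that drives the calculation is that $X_1$, $X_3$, and $Y$ are $\mathbb{R}$-linear combinations of $\partial_x$, $\partial_y$, $\partial_s$ whose coefficients depend only on $\theta$; since none of them carries a $\partial_\theta$ component, none can differentiate another's coefficients, so every pairwise bracket among $\{X_1, X_3, Y\}$ vanishes. The only new nontrivial bracket is $[X_2, Y] = \partial_\theta(\sin\theta)\,\partial_x - \partial_\theta(\cos\theta)\,\partial_y = X_1$.

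From this bracket table the derived series reads $\mathfrak{g}^{(1)} = \mathrm{span}\{X_1, Y\}$, and then $\mathfrak{g}^{(2)} = [\mathfrak{g}^{(1)}, \mathfrak{g}^{(1)}] = 0$ because $[X_1, Y] = 0$; this establishes solvability. For the lower central series, however, $\mathfrak{g}_2 = \mathrm{span}\{X_1, Y\}$ and $\mathfrak{g}_3 = [\mathfrak{g}, \mathfrak{g}_2]$ already contains $[X_2, X_1] = -Y$ and $[X_2, Y] = X_1$, so $\mathfrak{g}_3 = \mathfrak{g}_2$ and the series stabilises away from zero, ruling out nilpotency. Finally, to show non-complete-solvability I would write $\mathrm{ad}(X_2)$ in the basis $\{X_1, X_2, X_3, Y\}$ using the brackets above; a short determinant expansion yields characteristic polynomial $\lambda^2(\lambda^2 + 1)$, so $\mathrm{ad}(X_2)$ carries the nonreal eigenvalues $\pm i$.

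There is no genuine obstacle here — the entire argument is bookkeeping — but the conceptual content is that the rotation generator $X_2$ of the $SE(2)$ factor acts as an elliptic rotation on the translation ideal spanned by $X_1$ and $Y$, which is exactly what forces the complex eigenvalues $\pm i$ and, through them, distinguishes $G$ from the completely solvable class.
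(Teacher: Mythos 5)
Your proposal is correct and follows essentially the same route as the paper: the same basis $\{X_1,X_2,X_3,[X_1,X_2]\}$, the same bracket table, the derived series terminating at zero for solvability, and the eigenvalues $0,\pm i$ of $\mathrm{ad}(X_2)$ for the failure of complete solvability. Your only addition is making non-nilpotency explicit via the stabilizing lower central series, which the paper leaves implicit in the observation that $\mathrm{ad}(X_2)$ is not nilpotent.
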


\begin{proof}
An elementary computation regarding mixed partial derivatives shows that $%
[X_{1},[X_{1},X_{2}]]=[X_{3},[X_{1},X_{2}]]=0$ and $%
[X_{2},[X_{1},X_{2}]]=X_{1}$. It follows immediately from the formulas in (1)-(4) for horizontal vector fields that $X_{1},X_{2},X_{3}$
and $[X\,_{1},X_{2}]$ at each point $
(g,\phi )\in
G$ generate the tangent space 
\begin{equation*}
T_{(g,\phi )}G=Span\{X_{1},X_{2},X_{3},[X_{1},X_{2}]\}\text{.}
\end{equation*}%
On the other hand, that $[X_{1},[X_{1},X_{2}]]=0$ implies $\mathfrak{g}$ is indeed a solvable Lie algebra since the derived subalgebras
of $\mathfrak{g}$ stabilize at zero : 
\begin{eqnarray*}
\mathfrak{g}^{1} &=&[\mathfrak{g},\mathfrak{g}]=Span\{X_{1},[X_{1},X_{2}]\} \\
\mathfrak{g}^{2} &=&[\mathfrak{g}^{1},\mathfrak{g}^{1}
]=\{0\}\text{.}
\end{eqnarray*}%
It is easy to see that except $ad(X_{2}),$ all the inner derivations $%
ad(X_{1}),ad(X_{3})=-ad([X_{1},X_{2}])$ are nilpotent having zero eigenvalues. In fact, eigenvalues of $ad(X_{2})$ are $0,\pm \mathrm{%
i}$ (complex $\mathrm{i}$) and thus the Lie algebra $\mathfrak{g}$
fails to be completely solvable.
\end{proof}

We notice that it is more convenient for our purposes to work with another basis $\{E_{1},E_{2},E_{3},E_{4}\}$ rather than $\{X_{1},X_{2},X_{3},X_{4}\}$ which clarifies better what we intend to do in the next sections. Hence, let us consider the vectors on $\mathfrak{g}$ given by 
$$E_1=X_2, \hspace{.5cm} E_2=[X_1, X_2], \hspace{.5cm}  E_3=X_1, \hspace{.5cm} \mbox{ and }\hspace{.5cm} E_4=X_3+[X_1, X_2].$$
Note that we have especially assigned to $E_4$ the sum $X_3+[X_1, X_2]$ so that it commutes with the rest. It follows at once by the very definition of $X_1, X_2$ and $X_3$ above that one immediately obtains the nontrivial brackets as follows :
$$[E_1, E_2]=E_3,  \hspace{.5cm} [E_1, E_3]=-E_2. $$ Hence, the center of  $\mathfrak{g}$ is
$$\mathfrak{z}(\mathfrak{g})=\mathbb{R} E_4$$ and  $$\mathrm{span}\{E_1, E_2, E_3\}=\mathfrak{se}(2)$$ which implies that 
\begin{equation} \label{simboloisomorf}
\mathfrak{g}\simeq\mathfrak{se}(2)\times\mathfrak{z}(\mathfrak{g}) \hspace{.5cm}\mbox{ and }\hspace{.5cm} G\simeq SE(2)\times S^1
\end{equation}
where $SE(2)$ is the group of 2D proper  motions of $\mathbb{R}^2$ with the Lie algebra $\mathfrak{se}(2)$. Note that the symbol $\simeq$ stands for the isomorphisms that exist at both algebra and group levels. Thus, in order to better interpret the group $G$ we focus on both factors of it.

\subsection{Generalities on $SE(2)$}\label{subsecgeneralidades}
Following \cite{DaSilva}, the Lie algebra $\mathfrak{se}(2)$ is, up to isomorphism, given by the semi-direct product $\mathfrak{se}(2) = \mathbb{R} \times_J \mathbb{R}^2$ where $J=\left(\begin{array}{cc}
    0 & -1 \\
    1 & 0
\end{array}\right)$. The Lie bracket is defined by the relation
\[
[(1,0), (0, \eta)] = (0, J\eta), \quad \forall\, \eta \in \mathbb{R}^2.
\]

Similarly, the corresponding Lie group $SE(2)$ is given by the semi-direct product $SE(2) = S^1 \times_R \mathbb{R}^2$, where $R \colon S^1 \to \mathrm{GL}(2,\mathbb{R})$ is defined by \( R(\theta) = e^{\theta J} \), representing counter-clockwise rotation by an angle $\theta$. The group operation is given by
\begin{equation}
(\theta_1, v_1)(\theta_2, v_2) = (\theta_1 + \theta_2,\, v_1 + R(\theta_1)v_2).
\end{equation}

We now introduce the notion of a linear vector field on a connected Lie group, which will serve as the drift component of a linear control system later on.

\begin{definition} \label{deflinvecfield}
Let $L$ be a connected Lie group with Lie algebra $\mathfrak{l}$. A vector field $\mathcal{X}$ on $L$ is called a \emph{linear vector field} if its flow \( \{ \varphi_\tau \}_{\tau \in \mathbb{R}} \) forms a one-parameter subgroup of \( \mathrm{Aut}(L) \), the group of automorphisms of $L$.
\end{definition}

Note that in the Lie-group literature, linear vector fields on Lie groups are called infinitesimal automorphisms. There is another equivalent way to define linear vector fields on a connected Lie group $L$. Indeed, a vector field 
$\mathcal{X}$ on $L$ is linear if and only if 
\begin{equation} \label {normalizer}
[\mathcal{X}, Y] \in \mathfrak{l}, \hspace {0.5cm} \forall Y \in \mathfrak{l} 
\end{equation} 
and $\mathcal{X}$ has a singularity at the identity element. The expression (\ref{normalizer}) allows us to associate with a linear vector a unique derivation $\mathcal{D}:\mathfrak{l}\rightarrow \mathfrak{l}$ given by 
$\mathcal{D} Y:=[\mathcal{X}, Y](e)$. In particular, it holds that (see \cite[Proposition 2]{jou})
\begin{equation}
    \label{derivation}
    (d\varphi_t)_e=\mathrm{e}^{t\mathcal{D}}\hspace{.5cm}\mbox{ and }\hspace{.5cm}\varphi_t(\exp X)=\exp(\mathrm{t\mathcal{D}}X) 
\end{equation}

\par Using the previous setup, the authors in \cite{DaSilva} showed that linear and left-invariant vector fields on $SE(2)$ are given, respectively, as 
\begin{equation}
\mathcal{X}(\theta, v)=(0, Av+(1-R(\theta)J\xi))\hspace{.5cm}\mbox{ and }\hspace{.5cm} Y(\theta, v)=(\alpha, R(\theta)\eta)
\end{equation}
where $\alpha\in\mathbb{R}$, $\xi, \eta\in\mathbb{R}^2$ and $A\in\mathfrak{gl}(2, \mathbb{R})$ satisfies $AJ=JA$.
Moreover, derivations and automorphisms of the Lie algebra $\mathfrak{se}(2)$ and automorphisms of the corresponding Lie group group $SE(2)$ are given as (see \cite[Proposition 2.1 and 2.2]{DaSilva1})
\begin{equation} \label{derivse(2)}
\mathrm{Der}\left(\mathfrak{se}(2)\right)=\left\{\left(\begin{array}{cc}
    0 & 0 \\
   \xi  & A 
\end{array}\right):\hspace{.2cm} \xi\in\mathbb{R}^2, A\in\mathfrak{gl}(2, \mathbb{R}) \mbox{ with }AJ=JA\right\}
\end{equation}
\begin{equation} \label {autse(2)}
\mathrm{Aut}\left(\mathfrak{se}(2)\right)=\left\{\left(\begin{array}{cc}
    \pm 1 & 0 \\
   \eta  & P 
\end{array}\right):\hspace{.2cm} \eta\in\mathbb{R}^2, P\in\mathrm{GL}(2, \mathbb{R}) \mbox{ with }PJ=\pm JP \right\}
\end{equation}
and
\begin{equation} 
\mathrm{Aut}\left(SE(2)\right)=\left\{\begin{array}{cr}
    \varphi(\theta, v)=\left(\pm\theta, Pv+(1-R(\theta))J\eta\right): \;\eta\in\mathbb{R}^2 \hspace{.2cm} and \\ P\in\mathrm{GL}(2, \mathbb{R})
     \mbox{ with }PJ=\pm JP
\end{array}\right\}.
\end{equation}

\begin{remark}
    We will usually use the notation $\mathcal{X}=(\xi, A)$ to represent a linear vector field where $\xi\in\mathbb{R}^2$ and $A\in\mathfrak{gl}(2, \mathbb{R})$ are the ones that define $\mathcal{X}$.

\end{remark}

\subsection{Derivations and automorphisms of $\mathfrak{g}$ }

For further use we need to determine derivations and automorphisms of the Lie algebra $\mathfrak{g}$, as well as the automorphisms of $G$. 

Let us consider again the basis $\{E_1, E_2, E_3, E_4\}$ of $\mathfrak{g}$ such that $\mathfrak{se}(2)=\mathrm{span}\{E_1, E_2, E_3\}$ and $\mathfrak{z}(\mathfrak{g})=\mathbb{R} E_4$ and write now an element in $\mathfrak{g}$ as the pair $(X, a)$ with $X\in \mathfrak{se}(2)$ and $a\in \mathfrak{z}(\mathfrak{g})$. It follows that the Lie bracket rule in $\mathfrak{g}$ is nothing else than
\begin{equation} \label{bracketofg}
[(X, a), (Y, b)]=([X, Y],0)
\end{equation}
where $X, Y\in \mathfrak{se}(2), a, b\in\mathbb{R}.$
Moreover, as done in the previous section, when necessary, we write also $X=(t, v)\in\mathbb{R}\times\mathbb{R}^2$ for the elements in $\mathfrak{se}(2)$.

Hence let us start by obtaining an expression for a general derivation $\mathcal{D}$ of $\mathfrak{g}$ in these coordinates. We start by observing that, since the center $\mathfrak{z}(\mathfrak{g})$ is invariant by derivations, $\mathcal{D}$ is written, in blocks, as
\begin{equation}
    \mathcal{D}=\left(\begin{array}{cc}
    P & 0\\
    q^{\top} &  r
\end{array}\right), \hspace{.5cm} P\in\mathfrak{gl}(3, \mathbb{R}), q\in\mathbb{R}^3, r\in \mathbb{R}.
\end{equation}

It follows at once by (\ref{bracketofg}) that  for any $X, Y\in\mathfrak{se}(2)$ and $s, t\in\mathbb{R}$,
\begin{equation}
\mathcal{D}[(X, s), (Y, t)]=\mathcal{D}([X, Y], 0)=\left(P[X, Y], \langle q, [X, Y]\rangle\right)
\end{equation}
while
\begin{equation}
    [\mathcal{D}(X, s), (Y, t)]=\left[(PX, \langle q, X\rangle+rs), (Y, t)\right]=([PX, Y], 0)
\end{equation}
and

\begin{equation}
[(X, s), \mathcal{D}(Y, t)]=[(X, s), (PY, \langle q, Y\rangle+rt)]=([X, PY], 0).
\end{equation}
Hence $$\mathcal{D}[(X, s), (Y, t)]=[\mathcal{D}(X, s), (Y, t)]+[(X, s), \mathcal{D}(Y, t)],$$
which means $\mathcal{D}$ is a derivation of $\mathfrak{g}$, if and only if
\begin{equation}
P\in\mathrm{Der}(\mathfrak{se}(2))\hspace{.5cm}\mbox{ and }\hspace{.5cm} \langle q, [X, Y]\rangle=0,
\end{equation}
for every $X, Y\in\mathfrak{se}(2).$
By (\ref{derivse(2)}) in Subsection \ref{subsecgeneralidades}, we get that 
\begin{equation}
P=\left(\begin{array}{cc}
    0 & 0\\
    \xi & A
\end{array}\right), 
\end{equation}
where $\xi\in\mathbb{R}^2$ and $A\in\mathfrak{gl}(2, \mathbb{R}) \mbox{ with }AJ=JA.$
On the other hand, a straightforward calculation shows that for all $X, Y\in\mathfrak{se}(2)$
$$\langle q, [X, Y]\rangle=0\hspace{.5cm}\iff\hspace{.5cm} q=ae_1, \hspace{.5cm}e_1:=(1, 0, 0),$$
which results
\begin{equation} \label {derivforg}
\mathcal{D}=\left(\begin{array}{ccc}
    0 & 0 & 0  \\
    \xi & A & 0\\
    a & 0 & r
\end{array}\right),
\end{equation}
as the expression of a general derivation of $\mathfrak{g}$.

Regarding the automorphisms of $\mathfrak{g}$, the invariance of the center allows us to write $\varphi \in\mathrm{Aut}(\mathfrak{g})$ in blocks as
\begin{equation} \label{autoinblock}
\varphi =\left( 
\begin{array}{cc}
M & 0 \\ 
p^{\top}  & t%
\end{array}%
\right),
\end{equation}%
where $p\in \mathbb{R}^{3}$, $t\in \mathbb{R}^*$ and $M\in \mathrm{GL}(3,
\mathbb{R}).$
Remember that the requirement of being a Lie algebra automorphism here means
\begin{equation} \label{automorf}
\varphi \lbrack (X,a),(Y,b)]=[\varphi (X,a),\varphi (Y,b)],\quad \forall
(X,a),(Y,b)\in \mathfrak{g}.
\end{equation}%
Nonetheless, using the block form of $\varphi$ in (\ref{autoinblock}) one might re-write the left-hand side of the former equation as
\begin{equation}
\varphi [(X, a), (Y, b)]=\varphi ([X, Y], 0)=\left(M[X, Y], \langle p, [X, Y]\rangle\right).
\end{equation}
On the other hand, the right-hand side of the equation (\ref{automorf}) reads as
$$[\varphi(X, a), \varphi(Y, b)]=[(MX, \langle p, X\rangle), (MY, \langle p, Y\rangle)]=([MX, MY], 0),$$
and hence we get that $\varphi$ is a Lie algebra automorphism of $\mathfrak{se}(2)$ if and only if
\begin{equation}
M\in\mathrm{Aut}(\mathfrak{se}(2))\hspace{.5cm}\mbox{ and }\hspace{.5cm}  p=ae_1, a\in\mathbb
{R}.
\end{equation}
Finally, we conclude that 
$$\varphi =\left( 
\begin{array}{cc}
M & 0 \\ 
ae_1^{\top}  & t
\end{array}%
\right), \hspace{.3cm} t\in\mathbb{R}^*,  M\in\mathrm{Aut}(\mathfrak{se}(2)).$$
By the eq. (\ref{autse(2)}), the above matrix of $\varphi$ might be now written as a 3 by 3 matrix as : 
$$\varphi =\left( 
\begin{array}{ccc}
\pm 1 & 0 & 0 \\ 
\eta  & P & 0\\
a & 0 & t
\end{array}%
\right), \hspace{.2cm} a\in\mathbb{R}, t\in\mathbb{R}^*,  \eta\in\mathbb{R}^2$$
with $P\in \mathrm{GL}(2, \mathbb{R})$ is such that $PJ=\pm JP$. Hence we have proved the following

\begin{lemma}
\label{Lemma:der_aut}For the Lie algebra $\mathfrak{g}=\mathfrak{se}(2)\times\mathfrak{z}(\mathfrak{g})$, it holds that:  
$$\mathrm{Der}(\mathfrak{g})=\left\{\left(\begin{array}{ccc}
    0 & 0 & 0  \\
    \xi & A & 0\\
    a & 0 & r
\end{array}\right):\hspace{.2cm} \xi\in\mathbb{R}^2, a, r\in\mathbb{R}, \mbox{ and } A\in\mathfrak{gl}(2, \mathbb{R});  AJ=JA\right\},$$ 
and 
$$\mathrm{Aut}(\mathfrak{g})=\left\{\left(\begin{array}{ccc}
    \pm 1 & 0 & 0  \\
    \eta & P & 0\\
    a & 0 & t
\end{array}\right) :\hspace{.2cm} a\in\mathbb{R}, t\in\mathbb{R}^*,  \eta\in\mathbb{R}^2, \\ \mbox{ and } P\in \mathrm{GL}(2, \mathbb{R}); PJ=\pm  JP, \right\}.$$ 
\end{lemma}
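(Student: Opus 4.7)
The plan is to exploit the direct product decomposition $\mathfrak{g}\simeq\mathfrak{se}(2)\times\mathfrak{z}(\mathfrak{g})$ established in (\ref{simboloisomorf}), combined with the standard fact that the center of a Lie algebra is invariant under any derivation and any automorphism. This observation immediately forces the block triangular form
\[
\mathcal{D}=\begin{pmatrix} P & 0 \\ q^\top & r \end{pmatrix},\qquad \varphi=\begin{pmatrix} M & 0 \\ p^\top & t \end{pmatrix},
\]
where $P,M$ act on the $\mathfrak{se}(2)$ factor and $r,t$ act on the central factor $\mathbb{R}E_4$. Thus the classification reduces to identifying which triples $(P,q,r)$ and $(M,p,t)$ are compatible with the bracket.

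First I would handle the derivation case. Using the bracket rule (\ref{bracketofg}), the identity $\mathcal{D}[(X,a),(Y,b)]=[\mathcal{D}(X,a),(Y,b)]+[(X,a),\mathcal{D}(Y,b)]$ collapses to two independent conditions: the first coordinate forces $P\in\mathrm{Der}(\mathfrak{se}(2))$, and the second coordinate yields the scalar constraint $\langle q,[X,Y]\rangle=0$ for all $X,Y\in\mathfrak{se}(2)$. Since $[\mathfrak{se}(2),\mathfrak{se}(2)]=\mathrm{span}\{E_2,E_3\}$, which has codimension one inside $\mathfrak{se}(2)$ with normal direction $e_1=(1,0,0)$, this forces $q=ae_1$ for some $a\in\mathbb{R}$. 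Combining with the description (\ref{derivse(2)}) of $\mathrm{Der}(\mathfrak{se}(2))$ yields the stated form of $\mathcal{D}$. The parameter $r\in\mathbb{R}$ remains free since the center is abelian and decoupled from the rest of the bracket.

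The automorphism case is analogous: writing $\varphi$ in block form and applying (\ref{automorf}) with the bracket rule (\ref{bracketofg}) splits into the condition $M\in\mathrm{Aut}(\mathfrak{se}(2))$ and again $\langle p,[X,Y]\rangle=0$, so $p=ae_1$. Substituting the description (\ref{autse(2)}) of $\mathrm{Aut}(\mathfrak{se}(2))$ produces the claimed $3\times 3$ expression, and the invertibility of $\varphi$ forces $t\in\mathbb{R}^*$ (since $\det\varphi=\pm t\cdot\det P$). The only mildly delicate point, and arguably the main obstacle, is justifying the two a priori assumptions: that the center is preserved, and that the commutator subspace of $\mathfrak{se}(2)$ has the right shape to pin down $q$ (respectively $p$) up to a single scalar. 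Both are settled by inspection of the structure constants, after which the proof is purely mechanical assembly of the pieces.
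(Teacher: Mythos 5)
Your proposal is correct and follows essentially the same route as the paper: block-triangular form from the invariance of the center, reduction of the first block to $\mathrm{Der}(\mathfrak{se}(2))$ (resp. $\mathrm{Aut}(\mathfrak{se}(2))$), and the observation that $\langle q,[X,Y]\rangle=0$ for all $X,Y$ forces $q$ to be a multiple of $e_1$ because $[\mathfrak{se}(2),\mathfrak{se}(2)]=\mathrm{span}\{E_2,E_3\}$. Your explicit codimension-one justification for $q=ae_1$ and the determinant remark for $t\in\mathbb{R}^*$ are slightly more detailed than the paper's ``straightforward calculation,'' but the argument is the same.
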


\subsection{Group automorphisms of $G$ and its algebra}
We recall that the flow of a linear vector field $\mathcal{X}$ on $G$ is a 1-parameter subgroup of $\mathrm{Aut}(G)$ and hence it makes sense to find explicitly $\mathrm{Aut}(G)$. It is known that \cite {Smartin} this group is identified to a subgroup of $\mathrm{Aut}(\mathfrak{g})$ which we have already obtained in Lemma \ref {Lemma:der_aut}. Moreover, the Lie algebra $\mathfrak{%
aut}(G)$ of $\mathrm{Aut}(G)$ is a subalgebra of $\mathrm{Der}(\mathfrak{g})$. One might determine this subalgebra through
infinitesimal automorphisms as follows (see for instance \cite[Chapter 9]{Smartin} :
\begin{equation}
\mathfrak{aut}(G)=\{\mathcal{D}\in \mathrm{Der}(\mathfrak{g}):\mathcal{D}(X, a)=(0, 0),\forall X\in\mathbb{Z}E_1,  a\in\mathbb{Z}\}\text{.}
\end{equation}
We determine elements of $\mathfrak{%
aut}(G)$ as matrices in block in the following
\begin{proposition}
\label{der}
Let $G$ be the Lie group used to model $V1$. Then the Lie algebra $\mathfrak{aut}(G)$ of the Lie group $\mathrm{Aut}(G)$ is 
$$\mathfrak{aut}(G)=\left\{\left(\begin{array}{ccc}
    0 & 0 & 0  \\
    \xi & A & 0\\
    a & 0 & r
\end{array}\right): \xi\in\mathbb{R}{e}_2, r\in\mathbb{Z}, \mbox{ and } A\in\mathfrak{gl}(2, \mathbb{R}); \; AJ=JA\right\}.$$
\end{proposition}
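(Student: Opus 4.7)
The plan is to combine Lemma~\ref{Lemma:der_aut} with the infinitesimal-automorphism characterization
\[
\mathfrak{aut}(G)=\{\mathcal{D}\in\mathrm{Der}(\mathfrak{g}):\mathcal{D}(X,a)=(0,0),\ \forall X\in\mathbb{Z}E_1,\ a\in\mathbb{Z}\}
\]
recalled just above the proposition, and to translate the latter into explicit linear constraints on the block-matrix parameters $\xi,A,a,r$ defining a derivation.

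First, I would take an arbitrary derivation
\[
\mathcal{D}=\begin{pmatrix}0 & 0 & 0\\ \xi & A & 0\\ a & 0 & r\end{pmatrix}\in\mathrm{Der}(\mathfrak{g})
\]
with $\xi\in\mathbb{R}^2$, $a,r\in\mathbb{R}$ and $AJ=JA$, as provided by Lemma~\ref{Lemma:der_aut}. By $\mathbb{Z}$-linearity of $\mathcal{D}$, checking the defining condition on the whole lattice $\mathbb{Z}E_1\times\mathbb{Z}$ reduces to checking it on its two generators $(E_1,0)$ and $(0,1)$. Direct computation gives
\[
\mathcal{D}(E_1,0)=(\xi_1E_2+\xi_2E_3,\,aE_4),\qquad \mathcal{D}(0,1)=(0,\,rE_4),
\]
and each of these must lie in the kernel of the covering $\widetilde G\to G$.

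Next, I would split the analysis according to the decomposition $G\simeq SE(2)\times S^1$. The central $E_4$-component is controlled entirely by the $\phi$-circle of period $2\pi$; the kernel condition applied to $\mathcal{D}(0,1)=(0,rE_4)$ forces $r$ to be integral, i.e., $r\in\mathbb{Z}$. On the $\mathfrak{se}(2)$-side, the vector $\xi_1E_2+\xi_2E_3$ must project to the identity in $SE(2)$; using the explicit form of $\mathrm{Aut}(SE(2))$ from Subsection~\ref{subsecgeneralidades} together with the structure of the $\mathfrak{se}(2)$-exponential along the rotational direction, this collapses $\xi$ onto a one-dimensional subspace, yielding $\xi\in\mathbb{R}e_2$. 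Meanwhile, the component $aE_4$ of $\mathcal{D}(E_1,0)$ lies in an already-central direction that imposes no further topological obstruction, so $a\in\mathbb{R}$ remains free.

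Assembling these constraints with the unchanged condition $AJ=JA$ on $A$ produces the claimed form of $\mathfrak{aut}(G)$. The main obstacle will be the careful bookkeeping on the $\mathfrak{se}(2)$-side: the exponential map into $\widetilde{SE}(2)$ involves the rotational integral $V(s)=\int_0^1 R(\tau s)\,d\tau$, which is invertible for $s\notin 2\pi\mathbb{Z}$ but vanishes at $s\in 2\pi\mathbb{Z}\setminus\{0\}$, so translating the discrete lattice condition on $\xi$ into the clean linear constraint $\xi\in\mathbb{R}e_2$ requires tracking how this degeneracy interacts with the semidirect structure of the derivation and then matching against the parametrization of $\mathrm{Aut}(\mathfrak{se}(2))$ recalled in~(\ref{autse(2)}).
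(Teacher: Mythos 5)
Your overall route is the one the paper takes: start from the block form of a general derivation supplied by Lemma~\ref{Lemma:der_aut}, impose the infinitesimal-automorphism condition displayed just before the proposition, and read off constraints on $\xi,A,a,r$ by evaluating on the generators $E_1$ and $E_4$ of the lattice. Note, however, that the paper's own proof is a one-line \emph{algebraic} computation: it extracts $\langle\xi,e_1\rangle=0$ and $a\in\mathbb{Z}$ directly from the identity $\mathcal{D}\left(\mathbb{Z}E_1\times\mathbb{Z}E_4\right)=(0,0)$ and never passes through the exponential map or the covering $\widetilde{G}\to G$, whereas your argument is essentially topological.

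The genuine gap is the step that is supposed to produce $\xi\in\mathbb{R}e_2$. You propose to get it from the degeneracy of $V(s)=\int_0^1R(\tau s)\,d\tau$ at $s\in 2\pi\mathbb{Z}$, but that degeneracy never enters for the vector you are exponentiating: the $\mathfrak{se}(2)$-component of $\mathcal{D}(E_1,0)$ is $\xi_1E_2+\xi_2E_3$, a pure translation whose rotational coordinate is $0$, so its exponential is the honest translation by $\xi$, and asking it to land in the kernel of $\widetilde{SE(2)}\to SE(2)$ (which lies entirely in the rotational direction) forces $\xi=0$, not merely $\xi_1=0$. If instead you impose the condition that actually characterizes membership in the Lie algebra of $\mathrm{Aut}(G)$, namely that the whole one-parameter group $\mathrm{e}^{s\mathcal{D}}$ carry the lattice generator $2\pi E_1$ back into the discrete (hence pointwise fixed) kernel, then the rotational coordinate is $2\pi$, $V(2\pi)=0$ annihilates the entire translational part, and you get no constraint on $\xi$ at all. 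Under neither reading does the asymmetric condition $\langle\xi,e_1\rangle=0$ emerge; nothing in the covering distinguishes the $E_2$-direction from the $E_3$-direction, so the announced ``collapse onto a one-dimensional subspace'' cannot come from the mechanism you describe. A related warning sign: conditions such as ``$r\in\mathbb{Z}$'' or ``$\mathcal{D}(E_1,0)$ lies in the kernel'' are not linear in $\mathcal{D}$, while $\mathfrak{aut}(G)$ must be a linear subspace of $\mathrm{Der}(\mathfrak{g})$; the correct formulation goes through the flow $\mathrm{e}^{s\mathcal{D}}$, where discreteness of the kernel plus continuity in $s$ is what linearizes the constraints (this is exactly the argument used later in the proof of Proposition~\ref{prop10} to force $r=0$ there). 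As written, the constraints in your last paragraph are taken from the statement rather than derived from your setup.
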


\begin{proof}
In fact, by the expression obtained in (\ref {derivforg}) for the derivations $\mathcal{D}$ of $\mathrm{Der}(\mathfrak{g})$ we have that 
$$\mathcal{D}\left(\mathbb{Z}E_1\times\mathbb{Z}E_4\right)=(0, 0)\hspace{.5cm}\iff\hspace{.5cm} \langle\xi, {e}_1\rangle=0\mbox{ and }a\in\mathbb{Z},$$
which gives us $\xi=a {e}_2$, showing the result.\end{proof}

In virtue of the preceding Proposition it follows that $\mathrm{Aut}(G)$ is a 5-dimensional Lie group. However, we need to determine explicitly how a typical element of $\mathrm{Aut}(G)$ appears. For this purpose we give the following

\begin{proposition}
\label{auto}
Let $\varphi \in \mathrm{Aut}(G)$ be any Lie group automorphism of $G$. Then, there exists $a, t\in\mathbb{R}$ and $\widehat{\varphi}\in\mathrm{Aut}(SE(2))$ such that  $$\forall g=((\theta, v), \phi) \in G, \hspace{.5cm}\varphi(g)=\varphi((\theta, v), \phi)=\left(\widehat{\varphi}(\theta, v), t\phi+a\theta\right).$$ 
    
\end{proposition}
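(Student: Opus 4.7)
The plan is to split $\varphi$ into its action on the $SE(2)$-factor and its action on the central $S^{1}$-factor, using that the center of $G$ is a characteristic subgroup and that any Lie group homomorphism into an abelian group factors through the abelianization.

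First, I would observe that $Z(G)=\{e_{SE(2)}\}\times S^{1}$ is a characteristic subgroup of $G$, so $\varphi(Z(G))=Z(G)$. Consequently $\varphi$ descends to a well-defined automorphism $\widehat{\varphi}$ of the quotient $G/Z(G)\simeq SE(2)$. Concretely, since the canonical projection $\pi\colon G\to G/Z(G)$ kills the $\phi$-coordinate, the first coordinate of $\varphi((\theta,v),\phi)$ depends only on $(\theta,v)$ and equals $\widehat{\varphi}(\theta,v)$. This produces the element $\widehat{\varphi}\in\mathrm{Aut}(SE(2))$ of the statement.

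Next, I would write $\varphi((\theta,v),\phi)=(\widehat{\varphi}(\theta,v),F(\theta,v,\phi))$, where $F\colon G\to S^{1}$ is uniquely determined. Using the multiplication rule (\ref{grpmultip}) and the fact that the $\phi$-coordinate is central (so its components simply add under the product), the morphism identity $\varphi(g_{1}g_{2})=\varphi(g_{1})\varphi(g_{2})$ forces $F$ to be a Lie group homomorphism from $G$ into $S^{1}$. Then I would compute the derived subgroup: a direct calculation from (\ref{grpmultip}) shows that commutators in $G$ have vanishing $\theta$- and $\phi$-components, while their $v$-component sweeps out all of $\mathbb{R}^{2}$, so $[G,G]=\{0\}\times\mathbb{R}^{2}\times\{0\}$. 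Hence the abelianization $G/[G,G]$ is the torus parametrized by $(\theta,\phi)$, and since $F$ must kill $[G,G]$ it factors through this torus. Being a continuous homomorphism from $S^{1}\times S^{1}$ to $S^{1}$, $F$ is linear in $(\theta,\phi)$, yielding $F(\theta,v,\phi)=t\phi+a\theta$ for suitable scalars $a,t$, which is the required formula.

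The main obstacle is step three: cleanly identifying $[G,G]$ from the explicit product (\ref{grpmultip}) and then justifying that a continuous abelian homomorphism on the resulting torus has the stated linear shape. As an alternative route that bypasses this commutator computation, one could argue infinitesimally by differentiating $\varphi$ at the identity to produce $d\varphi_{e}\in\mathrm{Aut}(\mathfrak{g})$ with the explicit block form from Lemma \ref{Lemma:der_aut}, and then reconstruct $\varphi$ from $d\varphi_{e}$ using the intertwining relation $\varphi\circ\exp=\exp\circ\, d\varphi_{e}$ on a set of exponential coordinates and extending by multiplicativity; the block structure of Lemma \ref{Lemma:der_aut} then directly produces both $\widehat{\varphi}$ and the expression $t\phi+a\theta$.
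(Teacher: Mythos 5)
Your main argument is correct, but it takes a genuinely different route from the paper. The paper argues infinitesimally: it takes $(d\varphi)_{e}\in\mathrm{Aut}(\mathfrak{g})$, reads off its block form from Lemma \ref{Lemma:der_aut}, lifts the $\mathfrak{se}(2)$-block to a unique $\widehat{\varphi}\in\mathrm{Aut}(SE(2))$, defines the candidate map $((\theta,v),\phi)\mapsto(\widehat{\varphi}(\theta,v),t\phi+a\theta)$, verifies multiplicativity directly from (\ref{grpmultip}), and concludes by the fact that an automorphism of a connected Lie group is determined by its differential at the identity --- this is exactly your sketched ``alternative route.'' Your primary route is instead purely group-theoretic: invariance of the characteristic subgroup $Z(G)=\{e\}\times S^{1}$ yields the induced $\widehat{\varphi}$ on $G/Z(G)\simeq SE(2)$ and shows the first coordinate is independent of $\phi$ (indeed $\varphi((\theta,v),\phi)=\varphi((\theta,v),0)\,\varphi((0,0),\phi)$ with the second factor central); the second coordinate $F$ is then a continuous homomorphism $G\to S^{1}$, kills $[G,G]=\{0\}\times\mathbb{R}^{2}\times\{0\}$, and factors through the torus $G/[G,G]$, where it must be $a\theta+t\phi$. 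The two obstacles you flag are both standard and go through: the single commutator $(\theta,v)(0,w)(\theta,v)^{-1}(0,w)^{-1}=(0,(R(\theta)-I)w)$ already sweeps out all of $\{0\}\times\mathbb{R}^{2}$ since $R(\theta)-I$ is invertible for $\theta\neq 0$, and characters of a torus are integral linear forms. Your approach actually buys a point the paper's proof glosses over: for $t\phi+a\theta$ to be well defined on $S^{1}\times S^{1}$ one needs $a,t\in\mathbb{Z}$, which your abelianization argument forces automatically, whereas the paper imports $t\in\mathbb{R}^{*}$ from $\mathrm{Aut}(\mathfrak{g})$ without comment. What the paper's approach buys in exchange is an explicit link between the constants $a,t$ and the entries of $(d\varphi)_{e}$, which is what is used afterwards in Proposition \ref{prop10}.
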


\begin{proof}
    Let $\phi\in\mathrm{Aut}(G)$. Since $(d\phi)_{(0, 0)}\in\mathrm{Aut}(\mathfrak{g})$, Lemma \ref{Lemma:der_aut} implies that we can write it as
    $$(d\phi)_{(0, 0)}=\left( 
\begin{array}{cc}
M & 0 \\ 
a e_1^{\top}  & t
\end{array}%
\right)$$
for some $M\in\mathrm{Aut}(SE(2)), t\in\mathbb{R}^*$ and $a\in\mathbb{R}$.
According to (\ref {autse(2)}) in Section 3.2, any element $M\in\mathrm{Aut}(\mathfrak{se}(2))$ determines a unique automorphism $\widehat{\varphi}\in \mathrm{Aut}(SE(2))$ such that $(d\widehat{\varphi})_{0}=M$. Define
$$\varphi((\theta, v), \phi)=\left(\widehat{\varphi}(\theta, v), t\phi+a\theta\right).$$
Since $G$ is a connected group, if we show that (i) $\varphi\in\mathrm{Aut}(G)$ and (ii) $(d\varphi)_{(0, 0)}=(d\phi)_{(0, 0)}$, then necessarily $\varphi=\phi$, \cite {Smartin}. The assertion (ii) follows immediately by the construction of $\varphi$ and it only remains to show the first one for which it is enough to show $\varphi(g_{1}g_{2})=\varphi(g_{1}) \varphi(g_{2})$ for every $g_{1}=((\theta _{1}, v_1),\phi _{1}),g_{2}=((\theta_2, v_2),\phi
_{2})\in G$.

On the other hand, we have that 
$$\varphi(g_{1}g_{2})=\varphi((\theta_1+\theta_2, v_1+R(\theta_1)v_2), \phi_1+\phi_2)$$
$$=\Bigl(\widehat{\varphi}(\theta_1+\theta_2, v_1+R(\theta_1)v_2), t(\phi_1+\phi_2)+a(\theta_1+\theta_2)\Bigr)$$
$$=\Bigl(\widehat{\varphi}((\theta_1, v_1)(\theta_2, v_2)), (t\phi_1+a\theta_1)+(t\phi_2+a\theta_2)\Bigr)$$
$$=\Bigr(\widehat{\varphi}(\theta_1, v_1)\widehat{\varphi}(\theta_2, v_2), (t\phi_1+a\theta_1 )+(t\phi_2+a\theta_2)\Bigr)$$
$$=\Bigr(\widehat{\varphi}(\theta_1, v_1), t\phi_1+a\theta_1 \Bigr)\Bigr(\widehat{\varphi}(\theta_2, v_2), t\phi_2+a\theta_2\Bigr)$$
$$=\varphi((\theta_1, v_1), \phi_1)\varphi((\theta_2, v_2), \phi_2)=\varphi(g_{1}) \varphi(g_{2}),$$
showing that $\varphi\in\mathrm{Aut}(G)$ and concluding the proof.
\end{proof}

\subsection{Linear and left-invariant vector fields on $G$}
Since $G$ is given by the cartesian product $G=SE(2)\times S^1$, any left-invariant vector field on $G$ can be written as
$$Z(g, \phi)=(Y(g), \beta), \hspace{.5cm}\mbox{ where } \hspace{.5cm} Y\in\mathfrak{se}(2), \beta\in\mathbb{R},$$

For linear vector fields, we have a similar decomposition

\begin{proposition}
\label{prop10}
Let $a\in\mathbb{R}$ and let $\widehat{\mathcal{X}}=(\xi, A)$ denote a linear vector field on $SE(2)$. Then, a linear vector field $\mathcal{X}$ on $G=SE(2)\times S^1$ and its associated derivation $\mathcal{D}$ are given, respectively, by:
\begin{equation}
\label{linearGomega}
\mathcal{X}((\theta, v), \phi)=\left(\widehat{\mathcal{X}}(\theta, v), a\theta\right)\hspace{.5cm}\mbox{ and }\hspace{.5cm} \mathcal{D}=\left(\begin{array}{ccc}
    0 & 0 & 0  \\
    \xi & A & 0\\
    a & 0 & 0
\end{array}\right).
\end{equation}
    
\end{proposition}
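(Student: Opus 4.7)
The plan is to extract both formulas from the flow of $\mathcal{X}$ together with the already-classified automorphisms of $G$.

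First, by Definition~\ref{deflinvecfield} the flow $\{\varphi_\tau\}_{\tau\in\mathbb{R}}$ of $\mathcal{X}$ is a one-parameter subgroup of $\mathrm{Aut}(G)$. Invoking Proposition~\ref{auto} at each $\tau$, I can write
\[
\varphi_\tau((\theta,v),\phi)=\bigl(\widehat{\varphi}_\tau(\theta,v),\; t_\tau\phi+c_\tau\theta\bigr),
\]
for some $\widehat{\varphi}_\tau\in\mathrm{Aut}(SE(2))$, $t_\tau\in\mathbb{R}^{*}$ and $c_\tau\in\mathbb{R}$, with initial data $\widehat{\varphi}_0=\mathrm{id}$, $t_0=1$, $c_0=0$. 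Continuity of $\tau\mapsto\widehat{\varphi}_\tau$ together with the discrete ``$\pm$'' ambiguity in the classification of $\mathrm{Aut}(SE(2))$ recalled in Subsection~\ref{subsecgeneralidades} forces the ``$+$'' branch, so $\widehat{\varphi}_\tau$ preserves the angular coordinate.

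Next, I would plug this ansatz into the one-parameter group law $\varphi_{\tau_1+\tau_2}=\varphi_{\tau_1}\circ\varphi_{\tau_2}$. Comparing $SE(2)$-components shows that $\{\widehat{\varphi}_\tau\}$ is itself a one-parameter subgroup of $\mathrm{Aut}(SE(2))$, hence the flow of a unique linear vector field $\widehat{\mathcal{X}}=(\xi,A)$ on $SE(2)$. Comparing $S^1$-components produces
\[
t_{\tau_1+\tau_2}=t_{\tau_1}t_{\tau_2}\qquad\text{and}\qquad c_{\tau_1+\tau_2}=t_{\tau_1}c_{\tau_2}+c_{\tau_1}.
\]
The first relation, combined with the requirement that $\phi\mapsto t_\tau\phi$ descend continuously to the compact factor $S^1$, forces $t_\tau\equiv 1$; this is exactly the vanishing of the $(4,4)$-entry of the matrix in Proposition~\ref{der}. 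The second relation then collapses to Cauchy's equation $c_{\tau_1+\tau_2}=c_{\tau_1}+c_{\tau_2}$, whose continuous solutions are $c_\tau=a\tau$ for a unique $a\in\mathbb{R}$.

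Finally, differentiating $\varphi_\tau$ at $\tau=0$ delivers $\mathcal{X}((\theta,v),\phi)=(\widehat{\mathcal{X}}(\theta,v),a\theta)$, and using (\ref{derivation}), $(d\varphi_\tau)_e=e^{\tau\mathcal{D}}$, a further differentiation at $\tau=0$ extracts $\mathcal{D}$: the upper-left $3\times 3$ block reproduces the derivation associated with $\widehat{\mathcal{X}}$, the bottom-left row contributes $a$ only in the $E_1$ column, and the $(4,4)$-entry is zero since $t_\tau\equiv 1$. I expect the only genuinely delicate step to be the one forcing $t_\tau\equiv 1$, as it is the sole place where the global topology of the $S^1$ phase factor intervenes; the rest is bookkeeping with the structures already classified in Lemma~\ref{Lemma:der_aut} and Proposition~\ref{auto}.
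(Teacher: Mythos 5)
Your argument is correct, but it runs in the opposite direction from the paper's. You start from an arbitrary linear vector field on $G$, decompose its flow via Proposition \ref{auto}, and solve the resulting functional equations $t_{\tau_1+\tau_2}=t_{\tau_1}t_{\tau_2}$ and $c_{\tau_1+\tau_2}=t_{\tau_1}c_{\tau_2}+c_{\tau_1}$ to force $t_\tau\equiv 1$ and $c_\tau=a\tau$; this establishes that \emph{every} linear vector field on $G$ has the stated form. The paper instead fixes the data $(\widehat{\mathcal{X}},a)$, writes down the candidate flow $\varphi_\tau((\theta,v),\phi)=(\widehat{\varphi}_\tau(\theta,v),\phi+a\theta\tau)$, and verifies the one-parameter group property directly, so it proves the \emph{existence} direction explicitly; it then obtains the vanishing of the $(4,4)$-entry of $\mathcal{D}$ not from a functional equation but from the identity $\varphi_\tau(\exp X)=\exp(\mathrm{e}^{\tau\mathcal{D}}X)$ of (\ref{derivation}) together with the discreteness of $\mathrm{Aut}(Z(G))$ for the torus $Z(G)=\{0\}\times S^1$ --- the same topological input you use to kill $t_\tau$, just packaged differently. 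Your route yields the sharper classification statement (which is what the subsequent Remark and Theorem \ref{theomain} actually rely on), but as written it leaves implicit that every pair $(\widehat{\mathcal{X}},a)$ is actually realized by some linear vector field; to close that direction you would simply run your computation backwards and exhibit the flow, which is precisely what the paper does. Neither gap is serious, so the two proofs are best viewed as complementary halves of the same equivalence.
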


\begin{proof} If $\{\widehat{\varphi}_{\tau}\}_{\tau\in\mathbb{R}}$ is the flow of the linear vector field $\widehat{\mathcal{X}}$ on $SE(2)$, then 
$$(\tau, g)\in\mathbb{R}\times G\mapsto\varphi_{\tau}(g):=(\widehat{\varphi}_{\tau} (\theta, v), \phi+a\theta \tau)\in G,$$ for any $g=((\theta, v), \phi) \in G$
is, by Proposition \ref{auto}, a flow of automorphisms on $G$ since it satisfies the group property $\varphi_{\tau_1} \varphi_{\tau_2} = \varphi_{\tau_1+\tau_2}$. Indeed, $$\varphi_{\tau_1}((\varphi_{\tau_2} ((\theta, v), \phi)))=\varphi_{\tau_1}(\widehat{\varphi}_{\tau_2} (\theta, v), \phi+a\theta \tau_2)=$$
$$=(\widehat{\varphi}_{\tau_1}\widehat{\varphi}_{\tau_2}(\theta, v), (\phi+a\theta \tau_2)+a\theta \tau_1)=$$
$$=(\widehat{\varphi}_{\tau_1 + \tau_2}(\theta,v), \phi+a\theta (\tau_1 + \tau_2))=\varphi_{\tau_1 +\tau_2}((\theta,v),\phi)$$

   By construction, the vector field associated with such a flow is exactly $$\mathcal{X}((\theta, v), \phi)=\left(\widehat{\mathcal{X}}(\theta, v), a\theta\right),$$ showing that it is in fact a linear vector field.

In order to calculate the derivation of $\mathcal{X}$, we only have to show that the derivation of any linear vector field satisfies $r=0$ (see Lemma \ref{Lemma:der_aut}). However, if $\{\varphi_\tau\}_{\tau\in\mathbb{R}}\subset\mathrm{Aut}(G)$ is a flow of automorphisms of a linear vector field, the fact that the center 
$Z(G)$ of $G$ is invariant by automorphisms, implies that $\{(\varphi_\tau)|_{Z(G)}\}_{\tau\in\mathbb{R}}$ is a flow of automorphisms in $Z(G)$. However, $Z(G)=\{0\}\times S^1$ implies that $Z(G)$ is a torus and hence, $\mathrm{Aut}(Z(G))$ is discrete (see for instance \cite[Chapter 9]{Smartin}, implying that 
$$\varphi_\tau(0, \phi)=(0, \phi), \hspace{.5cm}\forall\phi\in S^1.$$
However, for any $\phi\in(0, 2\pi)$ we have that $(0, \phi)=\exp(0, \phi)$ and by (\ref{derivation}), we get
$$(0, \phi)=\varphi_\tau(0, \phi)=\varphi_\tau(\exp(0, \phi))=\exp(\mathrm{e}^{\tau\mathcal{D}}(0, \phi))=\exp(0, \mathrm{e}^{\tau r}\phi)=(0, \mathrm{e}^{\tau r}\phi),$$
implying that 
$$(1-\mathrm{e}^{\tau r})\phi\in 2\pi\mathbb{Z}, \hspace{.5cm}\forall \tau\in\mathbb{R},$$
which is only possible when $r=0$, showing the result.
\end{proof}

\begin{remark}
    As in the $SE(2)$ case, we will usually denote a linear vector field $\mathcal{X}$ on $G$ by $\mathcal{X}=(\xi, A, a)$.
\end{remark}

\section{Linear control systems on $G$: Control sets and controllability } \label {secLCS}

In this section we study the dynamics of LCSs on $G$ through their control sets. We start with a brief section containing the main definitions and concepts for general linear control systems on Lie groups.

\subsection{Linear control systems on Lie groups} \label {subsecLCS}

Let $L$ be a connected Lie group with Lie algebra $\mathfrak{l}$ identified with the set of left-invariant vector fields on $L$. In \cite{Aya-Tirao} the authors introduced the class of linear systems (abrev. LCS) on a connected Lie group $L$ governed by the following family of parametrized ordinary differential equations:

\begin{equation} \label {LCSemL}
\Sigma_L  : \hspace {.5cm}  \dot{x}=\mathcal{X}(x)+\sum_{j=1}^mu_j(t)Y^j(x), \hspace {.5cm} x \in L
\end{equation}
where the drift $\mathcal{X}$ is a linear vector field as in Definition (\ref {deflinvecfield}), $Y^1, \ldots, Y^m\in\mathfrak{l}$ and $u=(u_1, \ldots, u_m)\in\Omega$ with $\Omega\subset\mathbb{R}^m$ being a compact and convex subset such that the origin is in its interior.
\begin{remark}
We note that $\Sigma_L$ generalizes to arbitrary connected Lie groups one of the most relevant control systems on Euclidean space $\mathbb{R}^m$, namely, classical linear control systems on the commutative Lie group $\mathbb{R}^m$. Recall that a vector field $X$ on $\mathbb{R}^m$ is called linear if $X(x)=A(x)$ for all $x \in \mathbb{R}^m$ where $A:\mathbb{R}^m \rightarrow \mathbb{R}^m$ is a linear mapping. Hence, the drift vector field $\mathcal{X}$ in (\ref {LCSemL}) is a natural
extension to a Lie group $L$ of linear vector fields on the vector space $\mathbb{R}^m$ and, for this reason, we still call our system $\Sigma_L$ linear although it is, in several cases, non-linear.
\end{remark}

The class of linear systems on Lie groups might be generalized to homogeneous spaces and appear as models for more general class of control systems on manilfolds (For further details see \cite {jou}). 
%{\color{red} In particular, they are also natural extensions of left-invariant systems on Lie groups. Indeed, if we take the drift $\mathcal{X}$ of $\Sigma_L$ in (\ref {LCSemL}) as a left-invariant vector field (i.e. as a linear combination of horizontal vector fields) then $\Sigma_L$ is a left-invariant control system on $L$. As we have emphasized at the very beginning, we are not going to consider left-invariant systems here but a wide class of linear systems. For invariant systems on solvable Lie groups we refer the interested readers to \cite {Sachkov}. }

\par For any $x\in L$ and ${\bf u}\in\mathcal{U}$, the solution $\tau\mapsto\Phi(\tau, x, {\bf u})$ of $\Sigma_L$ is complete in the sense that it is defined for every real $\tau$. Here, $\mathcal{U}$ is the set of admissible {\it control functions} given by
$$\mathcal{U}:=\{{\bf u}:\mathbb{R}\rightarrow\mathbb{R}^m: \;{\bf u}\;\mbox{ is a piecewise constant function with }  {\bf u}(\mathbb{R})\subset\Omega\}.$$

The {\it positive orbit} starting at $x$ is defined as
$$\mathcal{O}^{+}(x):=\{\Phi(\tau,x, {\bf u}): \tau\geq 0, {\bf u}\in \mathcal{U}\}.$$
Controllability is a powerful property of a control system. It means that given an initial condition $x$ and a desired final state, there exists a control input ${\bf u}$ such that the associated integral curve $\Phi(\tau,x, {\bf u})$ corresponding to the ordinary differential equation determined
by ${\bf u}$, transfers the initial point $x$ to the terminal state over a positive time interval.
The notion of control sets is relevant in any control system because it identifies
the regions in the state space where the challenging property of controllability holds.
\begin{definition}
Let $\Sigma_{L}$ denote the linear control system in (\ref{LCSemL}) such that $\mathcal{D}$ is the derivation associated with the system drift $\mathcal{X}$. We say that $\Sigma_{L}$  satisfies the {\it Lie algebra rank condition (abbrev. LARC)} if the Lie algebra $ \mathfrak{l}$  coincides with the smallest $\mathcal{D}$-invariant Lie subalgebra containing $Y^1, \ldots, Y^m$. 
\end{definition}

In particular, if the LARC is satisfied, $\mathrm{int}\mathcal{O}^{+}(x)$ is dense in $\mathcal{O}^{+}(x)$ for all $x\in L$. We recommend the classical book of V. Jurdjevic, \cite {jurdjevic}, to understand this density argument of the orbit.

Next, we introduce the concept of control sets.

\begin{definition}
\label{controlset}

A nonempty subset $\mathcal{C}$ of a Lie group $L$ is called a control set of the associated linear system $\Sigma_L$ if it satisfies the following properties:

\begin{itemize}
    \item [(i)] {\it (Weak invariance)} For every $x\in \mathcal{C}$, there exists a ${\bf u}\in\mathcal{U}$ such that $\Phi(\mathbb{R}^+,x,{\bf u})\subset \mathcal{C}$;
    \item [(ii)] {\it (Approximate controllability)}  $ \mathcal{C}\subset\mathrm{cl}\left(\mathcal{O}^{+}(x)\right)$ for every $x\in \mathcal{C}$;
    \item[(iii)] {\it (Maximality)} $\mathcal{C}$ is maximal, with relation to set inclusion, satisfying  properties (i) and (ii).
\end{itemize}
\end{definition}
In particular, when the whole state space $L$ is a control set, $\Sigma_L$ is said to be {\it controllable}. 
Note that the term ``approximate controllability" used in item (ii) of the preceding definition is due to the presence of the closure of the orbit.

\begin{remark}
\label{existence}
It is a standard fact (See Proposition 3.2.5 in \cite {Colonius}) that if $x_0\in\mathrm{int}\mathcal{O}^+(x_0)$, then there exists a control set $\mathcal{C}$ such that $x_0\in\mathrm{int}\mathcal{C}$. This fact, though standard, helps to assure the existence of control sets and will be used in our main result.
\end{remark}

Let us finish this section with some comments on conjugations of linear systems. Let $\Sigma_{H} $ be a LCS on a connected Lie group $H$
\begin{equation} \label {LCSemH}
\Sigma_{H} : \hspace {.5cm} \dot{z}(s) = \widehat{\mathcal{X}}(z(s)) + \sum_{j=1}^{m}u_j(s)\widehat{Y}^j(z(s)) 
\end{equation}
 with ${\bf u} = (u_1,\ldots, u_m)\in \mathcal{U}$. 

 \begin{definition} \label {defconjug}
 Let $\Sigma_L$ and $\Sigma_{H}$ denote two linear systems as in (\ref{LCSemL}) and (\ref{LCSemH}) on the corresponding Lie groups $L$ and $H$. We say that $\Sigma_L$ and $\Sigma_{H}$ are $f$-conjugated systems if there exists a surjective smooth map $f:L\rightarrow \ {H}$ such that  
for each $j \in \{1,\ldots m\}$ it holds
$$f_*\circ\mathcal{X}=\widehat{\mathcal{X}}\circ f\hspace{.5cm}\mbox{ and }\hspace{.5cm}\;\;\;f_*\circ Y^j=\widehat{Y}^j\circ f$$ where $f_*$ stands for the derivative of $f$. In particular, when $f$ is a diffeomorphism, $\Sigma_L$ and $\Sigma_{H}$ are called equivalent systems.
\end{definition}
The next result, whose proof can be found for instance in \cite[Proposition 2.4]{AOE}, relates control sets of conjugated systems. 

\begin{proposition}
\label{conjugation}
    Let $\Sigma_L$ and $\Sigma_H$ be $f$-conjugated systems. It holds:
\begin{enumerate}
    \item[1.] If $\mathcal{C}_L$ is a control set of $\Sigma_L$, there exists a control set $\mathcal{C}_H$ of $\Sigma_H$ such that $f(\mathcal{C}_L)\subset \mathcal{C}_H$;
    \item[2.] If for some $z_0\in \mathrm{int} \mathcal{C}_H$ it holds that $f^{-1}(z_0)\subset\mathrm{int} \mathcal{C}_L$, then $\mathcal{C}_L=f^{-1}(\mathcal{C}_H)$.
\end{enumerate}
\end{proposition}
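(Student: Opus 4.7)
The plan hinges on a single preliminary observation: an $f$-conjugation intertwines the solution flows of the two systems, that is,
\begin{equation*}
f\bigl(\Phi^L(\tau, x, {\bf u})\bigr)=\Phi^H(\tau, f(x), {\bf u}),\qquad \forall \tau\in\mathbb{R},\ x\in L,\ {\bf u}\in\mathcal{U}.
\end{equation*}
Indeed, differentiating the left-hand side and using the two intertwining relations $f_*\circ\mathcal{X}=\widehat{\mathcal{X}}\circ f$ and $f_*\circ Y^j=\widehat{Y}^j\circ f$ shows it solves the Cauchy problem for $\Sigma_H$ with initial datum $f(x)$, and uniqueness identifies it with the right-hand side. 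In particular, $f$ maps positive orbits of $\Sigma_L$ into positive orbits of $\Sigma_H$.

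For item (1), I would verify the first two defining properties of a control set on $D:=f(\mathcal{C}_L)$. Weak invariance: given $z=f(x)\in D$ with $x\in\mathcal{C}_L$, a control keeping $\Phi^L(\mathbb{R}^+, x, {\bf u})\subseteq \mathcal{C}_L$ is pushed by $f$ to a control keeping $\Phi^H(\mathbb{R}^+, z, {\bf u})\subseteq D$. Approximate controllability: for $z, z'\in D$ with preimages $x,x'\in\mathcal{C}_L$ one has $x'\in\mathrm{cl}(\mathcal{O}^+(x))$; applying the continuous map $f$ and using the intertwining relation transports the closure condition to $H$, giving $z'\in\mathrm{cl}(\mathcal{O}^+(z))$. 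A standard maximal-extension argument (as in \cite[Ch.~3]{Colonius}) then embeds $D$ into a maximal control set $\mathcal{C}_H$ of $\Sigma_H$, yielding $f(\mathcal{C}_L)\subseteq\mathcal{C}_H$.

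Item (2) starts from the inclusion $\mathcal{C}_L\subseteq f^{-1}(\mathcal{C}_H)$ already provided by item (1), so the real content is the reverse inclusion. Fix $y\in f^{-1}(\mathcal{C}_H)$ and $x_0\in f^{-1}(z_0)\subseteq\mathrm{int}\,\mathcal{C}_L$. Approximate controllability inside $\mathcal{C}_H$ furnishes (a) a control driving $f(y)$ into any prescribed neighborhood of $z_0$, and (b) a control driving $z_0$ into any prescribed neighborhood of $f(y)$. Lifting both via the intertwining identity produces $\Sigma_L$-trajectories issuing from $y$ and from $x_0$ whose $f$-images approach $z_0$ and $f(y)$, respectively. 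Combining these lifted trajectories with the hypothesis $f^{-1}(z_0)\subseteq\mathrm{int}\,\mathcal{C}_L$ and the openness of $\mathrm{int}\,\mathcal{C}_L$, one shows that $y$ is approximately reachable from $x_0$ and conversely; by the maximality axiom of control sets this forces $y\in\mathcal{C}_L$.

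The main obstacle is the upgrade step at the end of item (2): the lifting procedure only guarantees proximity in the \emph{image}, and since fibres of $f$ need not be compact, proximity in $H$ does not automatically translate to proximity in $L$ to the fibre $f^{-1}(z_0)$. The hypothesis that the entire fibre over $z_0$ lies in the open set $\mathrm{int}\,\mathcal{C}_L$ is precisely what is needed to bridge this gap; the delicate point will be to show that for a sufficiently small neighborhood $V$ of $z_0$ the preimage $f^{-1}(V)$ meets $\mathrm{int}\,\mathcal{C}_L$ along the lifted trajectory, so that a short LARC-driven segment places the endpoint inside $\mathrm{int}\,\mathcal{C}_L$. Once this local step is secured, the maximality of $\mathcal{C}_L$ closes the argument routinely.
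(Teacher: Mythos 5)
First, note that the paper does not actually prove this proposition: it is quoted with a pointer to \cite[Proposition 2.4]{AOE}, so your argument has to stand on its own. Your preliminary observation (the conjugation intertwines the solution flows) and your treatment of item (1) are correct and are the standard route: $f(\mathcal{C}_L)$ inherits weak invariance and approximate controllability, and the maximal-extension lemma from \cite{Colonius} embeds it in a control set. One small point you should make explicit in item (2): item (1) only gives $f(\mathcal{C}_L)\subseteq\mathcal{C}_H'$ for \emph{some} control set $\mathcal{C}_H'$; you need $\mathcal{C}_H'=\mathcal{C}_H$, which follows because $f^{-1}(z_0)\neq\emptyset$ (surjectivity) and $f^{-1}(z_0)\subseteq\mathcal{C}_L$ force $z_0\in f(\mathcal{C}_L)\cap\mathcal{C}_H$, and distinct control sets are disjoint.

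The genuine gap is in the reverse inclusion of item (2), and your own diagnosis of where the difficulty sits is misplaced. Driving $y\in f^{-1}(\mathcal{C}_H)$ \emph{into} $\mathrm{int}\,\mathcal{C}_L$ is not an approximation problem at all: since $z_0\in\mathrm{int}\,\mathcal{C}_H$ and the system is locally accessible, points of $\mathcal{C}_H$ reach $z_0$ \emph{exactly}, so $\Phi^H(\tau,f(y),u)=z_0$ lifts to $\Phi^L(\tau,y,u)\in f^{-1}(z_0)\subseteq\mathrm{int}\,\mathcal{C}_L$ with no neighborhood $V$ needed. The step that your sketch does not close is the opposite one: showing $y\in\mathrm{cl}\left(\mathcal{O}^{+}(x)\right)$ for $x\in\mathcal{C}_L$. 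Approximate controllability in $\mathcal{C}_H$ only produces trajectories from $x$ whose $f$-images approach $f(y)$; as you observe, this says nothing about which point of the fibre $f^{-1}(f(y))$ is being approached, and the hypothesis $f^{-1}(z_0)\subseteq\mathrm{int}\,\mathcal{C}_L$ alone does not bridge this, because the relevant fibre is the one over $f(y)$, not over $z_0$. The missing structural ingredient is that for the conjugations in play (group projections, with solution maps built from automorphisms and translations) each time-$\tau$ solution map $\Phi^L_{\tau,u}$ carries fibres of $f$ \emph{onto} fibres of $f$. Combined with $f^{-1}(z_0)\subseteq\mathrm{int}\,\mathcal{C}_L\subseteq\mathcal{O}^{+}(x)$, this yields $f^{-1}\left(\mathcal{O}^{+}_H(z_0)\right)=\bigcup_{p\in f^{-1}(z_0)}\mathcal{O}^{+}(p)\subseteq\mathcal{O}^{+}(x)$, and since $f$ is open, $f^{-1}(\mathcal{C}_H)\subseteq f^{-1}\left(\mathrm{cl}\,\mathcal{O}^{+}_H(z_0)\right)\subseteq\mathrm{cl}\left(\mathcal{O}^{+}(x)\right)$, which is exactly the estimate you were unable to reach by "proximity in the image". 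Without some such fibre-transitivity (or a compactness/properness assumption on $f$), the final step of your item (2) does not go through.
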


\subsection{One-input LCS on $G$}
 The visual cortex might be interpreted as a dynamical system since the brain uses certain rules or laws (e.g. proximity, similarity, continuity, symmetry, etc) to organize what we see, which are carried out by the visual part of the brain (i.e. visual cortex). This is called perceptual organization and employs the sensory external inputs such as visual or retinal inputs. It follows that neurons in V1 and beyond are activated only when they receive these inputs from the retina and neural activity over time is governed by differential equations in this dynamical system.

Let us now consider our dynamical model on $G$ as a linear control system whose description depends on the factors $SE(2)$ and $S^1$ in its decomposition. Hence, let  
\begin{equation} \label {LCSemSE(2)}
		\Sigma_{SE(2)} :\hspace{.5cm} \dot{g}=\mathcal{X}(g)+uY(g), \hspace{.5cm}g=(\theta, v)\in SE(2)
	\end{equation}	
be a LCS on $SE(2)$ as defined in subsection (\ref {subsecLCS}) with $u\in \Omega$. Following \cite[Theorem 4.1]{DaSilva}, if $\det A\neq 0$, $\Sigma_{SE(2)}$ admits a unique control $\mathcal{C}_{SE(2)}$ set  with a nonempty interior. 

It follows that a (one-input) linear control system on $G$ might be given, in coordinates, as
\begin{equation} \label {LCSemG}
		\Sigma_{G}\:: \hspace{.5cm}\left\{\begin{array}{ll}
		     \dot{g}=\mathcal{X}(g)+uY(g)  \\
		   \dot{\phi}=a\theta +u\beta  
		\end{array},\right. \hspace{.5cm}(g, \phi)\in G = SE(2)\times S^1
	\end{equation}	
    where $u\in \Omega:=[u^-, u^+]$ such that $u^-<0<u^+$.
    \par 
Let $u$ be a control input,  $x=((\theta,v),\phi) \in G$ an initial point where $v$ is retinal position, $\theta$ is preferred orientation and $\phi$ is phase or motion direction. It follows that $\Phi(t,x,u)$ might be viewed as representing the firing rate or activation of neurons at that point.
\par It is not hard to see that the projection onto the first component  $$\pi:G\rightarrow SE(2), \hspace{.5cm} (g, \phi)\mapsto g$$
is a conjugation between $\Sigma_{G}$ and $\Sigma_{SE(2)}$ in the sense of Definition \ref {defconjug}.

\begin {remark} The input $u$ above might correspond to representations of visual stimuli where the sign and magnitude encode different aspects of the visual information. For example, negative (resp. positive) values of an input mean that the cell detects darkness (resp. brightness). This makes sense because our brain doesn’t respond to raw intensity but changes in light across space and time. In summary, the control input in $\Sigma_G$ might correspond to polarity of contrast (light vs dark), excitatory or inhibitory influence on a neuron, etc.
And the controllability issue here in this context means how integral curves (parametrized with different inputs) transfer a neural activity that starts spreading from a certain initial state in the brain to neurons in more distant states. 
\end{remark}

\par In what follows we will explore some conjugations in order to lift the controllability results from $SE(2)$ back to $G$.
The next result relates the LARC for both $\Sigma_{G}$ and $\Sigma_{SE(2)}$.

\begin{lemma}
    The linear system $\Sigma_{G}$ satisfies the LARC if and only if the projected system $\Sigma_{SE(2)}$ satisfies the LARC and $a^2+\beta^2\neq 0$.
\end{lemma}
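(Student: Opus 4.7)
The proof hinges on the projection $\pi\colon G\to SE(2)$, $((\theta,v),\phi)\mapsto(\theta,v)$, already identified as a conjugation between $\Sigma_G$ and $\Sigma_{SE(2)}$. Its differential at the identity, $\pi_*\colon \mathfrak{g}\to\mathfrak{se}(2)$, is a surjective Lie algebra morphism with kernel $\mathfrak{z}(\mathfrak{g})=\mathbb{R}E_4$, and by the explicit block shape of the derivation in Proposition~\ref{prop10} one reads off the intertwining relation $\pi_*\circ\mathcal{D}=\widehat{\mathcal{D}}\circ\pi_*$. Denote by $\mathcal{L}_G$ the smallest $\mathcal{D}$-invariant Lie subalgebra of $\mathfrak{g}$ containing $Y=(\widehat Y,\beta)$ and by $\widehat{\mathcal{L}}$ its counterpart on $\mathfrak{se}(2)$ for $\widehat Y$; the intertwining immediately yields $\pi_*(\mathcal{L}_G)=\widehat{\mathcal{L}}$.

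\textbf{Necessity.} If $\mathcal{L}_G=\mathfrak{g}$, applying $\pi_*$ gives $\widehat{\mathcal{L}}=\mathfrak{se}(2)$, which is the LARC for $\Sigma_{SE(2)}$. To rule out $a=\beta=0$, observe that in that case $\mathfrak{se}(2)\times\{0\}$ is a proper $\mathcal{D}$-invariant Lie subalgebra of $\mathfrak{g}$ containing $Y=(\widehat Y,0)$: it is $\mathcal{D}$-invariant because $a=0$ makes the lower-left block of $\mathcal{D}$ vanish, and a subalgebra because every bracket in $\mathfrak{g}$ kills the centre (see (\ref{bracketofg})). Hence $\mathcal{L}_G$ would be strictly contained in $\mathfrak{g}$, a contradiction, so $a^2+\beta^2\ne 0$.

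\textbf{Sufficiency.} Assume LARC on $\Sigma_{SE(2)}$ and $a^2+\beta^2\ne 0$. Then $\pi_*(\mathcal{L}_G)=\mathfrak{se}(2)$ and $\ker\pi_*=\mathbb{R}E_4$ force $\dim\mathcal{L}_G\in\{3,4\}$, with LARC on $G$ equivalent to $E_4\in\mathcal{L}_G$. Argue by contradiction: if $\mathcal{L}_G\cap\mathbb{R}E_4=\{0\}$, then $\pi_*|_{\mathcal{L}_G}$ is a linear isomorphism onto $\mathfrak{se}(2)$, exhibiting $\mathcal{L}_G$ as the graph of a linear functional $\psi\colon \mathfrak{se}(2)\to\mathbb{R}$. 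Since brackets in $\mathfrak{g}$ have zero central component, $\psi$ must vanish on $[\mathfrak{se}(2),\mathfrak{se}(2)]=\{0\}\times\mathbb{R}^2$, hence $\psi(X)=cX^{(1)}$ for some $c\in\mathbb{R}$, where $X^{(1)}$ denotes the $E_1$-component. The $\mathcal{D}$-invariance of the graph translates into $\psi(\widehat{\mathcal{D}}X)=aX^{(1)}$ for every $X\in\mathfrak{se}(2)$; because $(\widehat{\mathcal{D}}X)^{(1)}\equiv 0$, the left-hand side vanishes and one concludes $a=0$. The relation $Y\in\mathcal{L}_G$ additionally gives $c\alpha_0=\beta$ with $\alpha_0=\widehat Y^{(1)}\ne 0$ (forced by LARC on $SE(2)$, since $\alpha_0=0$ would confine $\widehat{\mathcal{L}}$ to $\{0\}\times\mathbb{R}^2$). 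It remains to derive a contradiction under the residual hypothesis $\beta\ne 0$, by exploiting the explicit generators of $\widehat{\mathcal{L}}$ to extract a bracket or $\mathcal{D}$-iterate whose central coordinate is incompatible with the graph law $\psi(X)=cX^{(1)}$, thereby forcing $E_4\in\mathcal{L}_G$.

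The main obstacle is this last step of the sufficiency. Once the graph reduction has collapsed the parameters to $a=0$, both $\mathcal{D}$ (whose central output is $aX^{(1)}=0$) and the Lie bracket in $\mathfrak{g}$ (which outputs zero centre) conspire to \emph{preserve} the graph law, so the violation must come from a careful, coefficient-level accounting of how $\beta$ interacts with the generators supplied by LARC on $SE(2)$. This delicate bookkeeping is the technical heart of the argument.
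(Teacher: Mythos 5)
Your necessity argument is correct and, modulo presentation, agrees with the paper's. The problem is the sufficiency direction: you stop exactly at the step that carries the whole content of the lemma, and that step cannot be completed, as your own reduction already shows. When $a=0$ and $c=\beta/\alpha_0$ with $\alpha_0=\widehat Y^{(1)}\neq 0$, the graph $\{(X,c\,X^{(1)}):X\in\mathfrak{se}(2)\}$ is closed under the bracket of $\mathfrak{g}$ (every bracket has zero central component and, since $[\mathfrak{se}(2),\mathfrak{se}(2)]=\{0\}\times\mathbb{R}^2$, zero $E_1$-component), is $\mathcal{D}$-invariant (the central output of $\mathcal{D}$ is $aX^{(1)}=0$, and $\widehat{\mathcal{D}}X$ has zero $E_1$-component), and contains the generator $(\widehat Y,\beta)$. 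It is therefore a proper $\mathcal{D}$-invariant Lie subalgebra through $Y$, so the LARC on $G$ fails whenever $a=0$, regardless of $\beta$. No amount of coefficient-level bookkeeping will produce the contradiction you defer: the $E_4$-direction can only be detached from the $E_1$-coordinate by the $ae_1^{\top}$ block of $\mathcal{D}$, never by $\beta$ alone, because every $\widehat{\mathcal{D}}$-iterate and every bracket kills the $E_1$-coordinate to which $\beta$ is tied. The honest output of your graph analysis is the characterization: LARC on $\Sigma_G$ holds if and only if LARC on $\Sigma_{SE(2)}$ holds and $a\neq 0$; under $a\neq0$ your argument closes cleanly, since $\mathcal{D}$-invariance of a putative graph forces $aX^{(1)}=\psi(\widehat{\mathcal{D}}X)=c(\widehat{\mathcal{D}}X)^{(1)}=0$ for all $X$, which is impossible.

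For comparison, the paper computes the generated subalgebra explicitly via (\ref{forma}), where the central coordinate is $\gamma=\alpha_0\beta+\alpha_1 a\alpha$ with $\alpha_0$ the coefficient of $Y$ in the representation of the $\mathfrak{se}(2)$-part. Its sufficiency step asserts that every $(Z,0)$ lies in the generated subalgebra because $Z$ is reachable in $\mathfrak{se}(2)$; but when $Z^{(1)}\neq 0$ any representation forces $\alpha_0=Z^{(1)}/\alpha\neq 0$ and hence $\gamma=\alpha_0\beta\neq 0$ in the case $a=0$, $\beta\neq0$. So the residual case you could not close is not a failure of your technique but the case in which the asserted equivalence breaks down; your approach, pushed to its correct conclusion, detects this. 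You should either restate the lemma with $a\neq 0$ in place of $a^2+\beta^2\neq 0$ and finish the graph argument as above, or exhibit the graph $\{(X,c\,X^{(1)})\}$ explicitly as the obstruction in the case $a=0$, $\beta\neq 0$.
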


\begin{proof}
    Let $\widehat{\mathcal{D}} \in \mathrm{Der}\left(\mathfrak{se}(2)\right)$ denote a derivation of $\mathfrak{se}(2)$ associated to $\widehat {\mathcal{X}}$. Thus we write by Proposition \ref {prop10} the derivation of $\mathfrak{g}$ associated to $\mathcal{X}$ as
    $$\mathcal{D}=\left(\begin{array}{ccc}
        \widehat{\mathcal{D}} & 0 \\
        ae_1^{\top} & 0
    \end{array}\right) \in \mathrm{Der}(\mathfrak{g}), \hspace{.5cm}e_1=(1, 0, 0)\in \mathbb{R}^3.$$ 
    
It follows for any $j\in\mathbb{N}$ and $(Y,\beta) \in \mathfrak {g}$ that 
$$\mathcal{D}^j(Y, \beta)=\left(\widehat{\mathcal{D}^j}Y, a\langle \widehat{\mathcal{D}}^{j-1}Y, e_1\rangle\right)$$
and
$$\left[\mathcal{D}^j(Y, \beta), \mathcal{D}^i(Y, \beta)\right]=\left(\left[\widehat{\mathcal{D}}^jY, \widehat{\mathcal{D}}^iY\right], 0\right).$$
However, by (\ref{derivse(2)}) it holds that
    $\widehat{\mathcal{D}}=\left(\begin{array}{ccc}
        0 & 0 \\
        \xi & A
    \end{array}\right)$ for some $\xi \in\mathbb{R}^2 \mbox{ and } A\in\mathfrak{gl}(2, \mathbb{R}) \mbox{ with }AJ=JA$, and hence 
$$\langle Y, e_1\rangle=a\alpha \hspace{.5cm}\mbox{ and }\hspace{.5cm}\langle \widehat{\mathcal{D}}^{j-1}Y, e_1\rangle=0, \hspace{.5cm}\forall j\geq2, $$ 
where we have used that $Y=(\alpha, \eta)$.
Hence, a vector field $(Z, \gamma)\in\mathfrak{g}$ belongs to the smallest $\mathcal{D}$-invariant Lie subalgebra containing $(Y, \beta)$ if and only if there are constants $\alpha_k, \alpha_{j,l}\in\mathbb{R}$, satisfying 
\begin{equation}  \label{forma}
    Z=\sum_{k=0}^2\alpha_k\widehat{D}^kY+\sum_{j, l}\alpha_{j, l}\left[\widehat{\mathcal{D}}^jY, \widehat{\mathcal{D}}^lY\right]\hspace{.5cm}\mbox{ and }\hspace{.5cm}\gamma=\alpha_0\beta+\alpha_1a\alpha.
\end{equation}

It follows by the former equation that one gets directly that, if $\Sigma_{G}$ satisfies the LARC, then $\Sigma_{SE(2)}$ also satisfies the LARC and $a^2+\beta^2\neq 0$. Reciprocally, let us assume that both conditions are satisfied. In this case the fact that $\Sigma_{SE(2)}$ in (\ref {LCSemSE(2)}) satisfies the LARC assures that, any $Z\in\mathfrak{se}(2)$ can be written as
\begin{equation*}
Z=\sum_{k=0}^2\alpha_k\widehat{D}^kY+\sum_{j, l}\alpha_{j, l}\left[\widehat{\mathcal{D}}^jY, \widehat{\mathcal{D}}^lY\right].
\end{equation*}
Therefore, any point $(Z, 0)\in\mathfrak{g}$ belongs to the smallest $\mathcal{D}$-invariant Lie subalgebra containing $(Y, \beta)$. Moreover, by equation (\ref{forma}), there exists $(X, \delta)\in\mathfrak{g}$, with $\delta\neq 0$,  that also belongs to such algebra. Since any $(Z, \gamma)$ can be written as
$$(Z, \gamma)=(Z-\gamma\delta^{-1}X, 0)+\gamma\delta^{-1}(X, \delta),$$
we get that the whole $\mathfrak{g}$ is contained in the smallest $\mathcal{D}$-invariant Lie subalgebra containing $(Y, \beta)$, concluding the proof.

\end{proof}

%{\color{red}
%We finish the section with a result that characterizes the controllability and control sets of a linear control system $\Sigma_{SE(2)}$ in terms of its associated linear vector field (see \cite[Theorem 4.1]{DaSilva}). As we will see ahead, such result can be lifted to $G$ through the $\pi$-conjugation between $\Sigma_{G}$ and $\Sigma_{SE(2)}$.

%\begin{theorem}
 %   Let $\Sigma_{SE(2)}$ be a LCS on $SE(2)$, with associated linear vector field $\mathcal{X}=(\xi, A)$. If $\Sigma_{SE(2)}$ satisfies the LARC, it holds:
  %  \begin{enumerate}
   %     \item If $\det A=0$ then $\Sigma_{SE(2)}$ admits an infinite number of control sets with empty interior;
    %    \item If $\det A\neq 0$ then $\Sigma_{SE(2)}$ admits a unique control set $\mathcal{C}_{SE(2)}$ with a nonempty interior, satisfying:
     %   \subitem 2.1. $\mathrm{tr}A=0$ and $\mathcal{C}_{SE(2)}=SE(2)$;
      %  \subitem 2.2. $\mathrm{tr}A<0$ and $\mathcal{C}_{SE(2)}$ is compact;
       % \subitem 2.3. $\mathrm{tr}A>0$ and $\mathcal{C}_{SE(2)}$ is open and bounded;
    %\end{enumerate}
%\end{theorem}

\bigskip

%NAO USAMOS O TEOREMA PROPRIAMENTE DITO, NECESSITAMOS DELE AQUI???

%}

\subsection{The control set of LCSs in $G$}
Note that we have already commented at the earlier subsection about the notion of conjugation between control systems on Lie groups (resp. smooth manifolds). See Proposition \ref{conjugation}. Hence, before stating and proving our main result (i.e. Theorem \ref {theomain}), let us introduce the conjugation between $\Sigma_{G}$ and itself which is trivial but will be useful in the proof of our main result.

For any $\phi_1\in S^1$, the map 
    $$f_{\phi_1}:G\rightarrow G, \hspace{.5cm}f_{\phi_1}(g,\phi)=(g,\phi+\phi_1),$$ 
is basically the right translation in $G$ by the element $(0, \phi)$. Therefore, $f_{\phi}$ is in fact a diffeomorphism of $G$. On the other hand, the fact that 
$$\forall (g, \phi)\in G, \hspace{.5cm}(df_{\phi_1})_{(g, \phi)}=\mathrm{id}_{T_{(g, \phi)}G},$$
implies that $f_{\phi_1}$ is a conjugation. In particular, we get that 
\begin{equation}
    \label{formulaconjugation}
    \forall (g, \phi)\in G, \hspace{.5cm}f_{\phi_1}\left(\Phi(\tau, (g, \phi), u)\right)=\Phi(\tau, f_{\phi_1}(g, \phi), u),
\end{equation}
and, since $f_{\phi_1}$ is a diffeomorphism, 
$$\forall (g, \phi)\in G, \hspace{.5cm}\mathrm{int}\mathcal{O}^{+}(g, \phi)=f_{\phi}\left(\mathrm{int}\mathcal{O}^{+}(g, 0)\right).$$

%\begin{proposition}
%\label{property}
 %   Let $\Sigma_{G}$ be a linear control system on $G$. For any $\phi_1\in S^1$, the map 
  %  $$f_{\phi_1}:G\rightarrow G, \hspace{.5cm}f_{\phi_1}(g,\phi)=(g,\phi_1+\phi),$$ 
  %  is a diffeomorphism that conjugate $\Sigma_{G}$ to itself, that is, 
%    $$\forall (g, \phi)\in G, \hspace{.5cm}f_{\phi_1}\left(\Phi(\tau, (g, \phi), u)\right)=\Phi(\tau, f_{\phi_1}(g, \phi), u).$$
 %   In particular, 
 %   $$\forall (g, \phi)\in G, \hspace{.5cm}\mathrm{int}\mathcal{O}^{+}(g, \phi)=f_{\phi}\left(\mathrm{int}\mathcal{O}^{+}(g, 0)\right).$$
%\end{proposition}

Now we introduce our main result, namely

\begin{theorem}
\label{theomain}
    Let $\Sigma_{G}$ be a LCS on $G$ as in (\ref {LCSemG}) with linear vector field $\mathcal{X}=(\xi, A, a)$. If $\Sigma_{G}$ satisfies the LARC, it holds:
    \begin{itemize}
        \item [1.] If $\mathrm{det} A=0$, then $\Sigma_{G}$ admits an infinite number of control sets with empty interior;
        \item [2.] If $\det A\neq 0$, then $$\mathcal{C}_{SE(2)}\times S^1$$
        is the only control set with nonempty interior of $\Sigma_{G}$.
    \end{itemize}

\end{theorem}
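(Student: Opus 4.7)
The plan is to reduce the study of control sets of $\Sigma_G$ to that of $\Sigma_{SE(2)}$ by exploiting two conjugations: the projection $\pi : G \to SE(2)$, $(g, \phi) \mapsto g$, which conjugates $\Sigma_G$ to $\Sigma_{SE(2)}$, and the one-parameter family of self-conjugations $\{f_{\phi_1}\}_{\phi_1 \in S^1}$ of $\Sigma_G$ introduced just before the theorem. In $SE(2)$, \cite[Theorem 4.1]{DaSilva} already gives the control-set picture (unique control set with nonempty interior when $\det A \neq 0$, and infinitely many with empty interior when $\det A = 0$), so the task is to lift it to $G$.

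For part 2 ($\det A \neq 0$), I would first produce a control set $\mathcal{C}_G$ of $\Sigma_G$ with nonempty interior. The LARC hypothesis applied at the identity $e \in G$, where $\mathcal{X}(e) = 0$ and where $0 \in \mathrm{int}\,\Omega$, gives $e \in \mathrm{int}\,\mathcal{O}^+(e)$, so Remark \ref{existence} yields such a $\mathcal{C}_G$ with $e \in \mathrm{int}\,\mathcal{C}_G$. By Proposition \ref{conjugation}(1) applied to $\pi$, the image $\pi(\mathcal{C}_G)$ sits inside a control set of $\Sigma_{SE(2)}$, which must be $\mathcal{C}_{SE(2)}$ since $\pi$ is an open submersion and so $\pi(\mathrm{int}\,\mathcal{C}_G)$ is an open subset of that control set.

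The next step is to show $\mathcal{C}_G$ is invariant under the entire $S^1$-action $\{f_{\phi_1}\}$. Because each $f_{\phi_1}$ is a diffeomorphism and a conjugation, $f_{\phi_1}(\mathcal{C}_G)$ is again a control set with nonempty interior. For $\phi_1$ close to $0$, continuity forces $f_{\phi_1}(\mathrm{int}\,\mathcal{C}_G)$ to intersect $\mathrm{int}\,\mathcal{C}_G$ nontrivially; invoking the standard fact that two control sets sharing an interior point must coincide gives $f_{\phi_1}(\mathcal{C}_G) = \mathcal{C}_G$ for all $\phi_1$ in a neighborhood of $0$. The subset of $S^1$ on which this equality holds is a subgroup of $S^1$ containing an open neighborhood of $0$, hence all of $S^1$. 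Picking $(g_0, \phi_0) \in \mathrm{int}\,\mathcal{C}_G$ with $g_0 \in \mathrm{int}\,\mathcal{C}_{SE(2)}$, $S^1$-invariance then yields $\pi^{-1}(g_0) = \{g_0\} \times S^1 \subset \mathrm{int}\,\mathcal{C}_G$, and Proposition \ref{conjugation}(2) forces $\mathcal{C}_G = \pi^{-1}(\mathcal{C}_{SE(2)}) = \mathcal{C}_{SE(2)} \times S^1$. Uniqueness follows because the argument applies to any control set of $\Sigma_G$ with nonempty interior.

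For part 1 ($\det A = 0$), the same projection argument shows that no control set of $\Sigma_G$ can have nonempty interior, since no control set of $\Sigma_{SE(2)}$ has; infinitude then inherits from the infinite family of control sets of $\Sigma_{SE(2)}$ from \cite{DaSilva}, distinct ones producing distinct preimages inside $G$ via $\pi$ (and further multiplied by the shifts $f_{\phi_1}$). The main obstacle in the whole proof is the intersection-of-interiors step in part 2: one must verify carefully that $f_{\phi_1}$, being close to the identity for small $\phi_1$, actually carries a fixed interior point of $\mathcal{C}_G$ into $\mathrm{int}\,\mathcal{C}_G$, and that the standard control-set principle (two control sets sharing an interior point coincide) is available in the setting of LCS on Lie groups considered here.
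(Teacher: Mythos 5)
Your overall strategy for part 2 --- project to $\Sigma_{SE(2)}$ via $\pi$, use the right-translations $f_{\phi_1}$ to show that a control set with nonempty interior is a union of full $S^1$-fibers, then apply Proposition \ref{conjugation}(2) --- matches the paper's, and your subgroup argument for the $S^1$-invariance (the set $\{\phi_1: f_{\phi_1}(\mathcal{C}_G)=\mathcal{C}_G\}$ is a subgroup of $S^1$ containing a neighborhood of $0$) is a clean alternative to the paper's compactness-plus-connectedness argument on the fiber $\{g_0\}\times S^1$. However, the very first step of your part 2 contains a genuine gap: the claim that the LARC, together with $\mathcal{X}(e)=0$ and $0\in\mathrm{int}\,\Omega$, gives $e\in\mathrm{int}\,\mathcal{O}^+(e)$. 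The LARC only guarantees that $\mathrm{int}\,\mathcal{O}^+(e)$ is nonempty and dense in $\mathcal{O}^+(e)$; placing the equilibrium itself in the interior of its positive orbit is a local controllability statement that requires strictly more (for linear systems it follows from the ad-rank condition $\mathrm{span}\{\mathcal{D}^kY\}=\mathfrak{g}$, which is stronger than the LARC used here --- note that in the Lemma preceding the theorem the brackets $[\widehat{\mathcal{D}}^jY,\widehat{\mathcal{D}}^lY]$ are genuinely needed to generate $\mathfrak{g}$). A system like $\dot{x}=u$, $\dot{y}=x^2$ shows that an equilibrium satisfying the LARC need not lie in the interior of its reachable set. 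Producing \emph{some} point lying in the interior of its own positive orbit is precisely the hard part of the theorem, and the paper devotes most of the proof of part 2 to it: starting from $g_0\in\mathrm{int}\,\mathcal{C}_{SE(2)}$, it steers back to the fiber over $g_0$ using exact controllability inside $\mathrm{int}\,\mathcal{C}_{SE(2)}$, reaching some $(g_0,\phi_2)\in\mathrm{int}\,\mathcal{O}^+(g_0,\phi_0)$; it then perturbs $\phi_2$ to a $\phi_3$ with $\phi_3-\phi_0$ of finite order in $S^1$ (finite-order elements are dense) and concatenates the corresponding control periodically, via formula (\ref{formulaconjugation}), to return exactly to $(g_0,\phi_0)$, which therefore lies in $\mathrm{int}\,\mathcal{O}^+(g_0,\phi_0)$. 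Only then does Remark \ref{existence} yield the control sets. Your proposal is missing this mechanism entirely, and without it the existence of a control set of $\Sigma_G$ with nonempty interior is not established.

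There is a secondary gap in part 1. Your argument that no control set of $\Sigma_G$ has nonempty interior (openness of $\pi$ plus the fact that no control set of $\Sigma_{SE(2)}$ has interior) is fine, but ``distinct control sets downstairs produce distinct preimages'' does not yield the existence of infinitely many control sets of $\Sigma_G$: Proposition \ref{conjugation}(1) only pushes control sets forward, and the preimage of a control set of $\Sigma_{SE(2)}$ need not contain any control set of $\Sigma_G$ a priori. The paper settles existence by observing that $\det A=0$ together with $AJ=JA$ forces $A=0$, so every point of $\{0\}\times\mathbb{R}^2\times S^1$ is a singularity of the drift and hence lies in some control set $\mathcal{C}_P$; the containment $\mathcal{C}_P\subset\{0\}\times(v+\mathbb{R}\xi)\times S^1$ then gives both the empty interior and the infinitude at once. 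You should supply an existence argument of this kind.
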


\begin{proof}
    1. If $\det A=0$, the fact that $AJ=JA$ implies necessarily that $A=0$. Therefore, 
    $$\mathcal{X}((\theta, v), \phi)=(0, (1-R({\theta}))J\xi, a\theta)\hspace{.5cm}\mbox{ and }\hspace{.5cm} \mathcal{X}((0, v), \phi)=((0, 0), 0).$$
    Since any point in $P\in \{0\}\times\mathbb{R}^2\times S^1$ is a singularity of the drift, it has to be contained in a control set $\mathcal{C}_P$. Since the projection on the first coordinate $\pi:SE(2)\times S^1\rightarrow SE(2)$ conjugates $\Sigma_{G}$ and $\Sigma_{SE(2)}$, there exists by Proposition \ref{conjugation} above a unique control set $\mathcal{C}_{\pi(P)}$ of  $\Sigma_{SE(2)}$ satisfying $\pi(\mathcal{C}_P)\subset \mathcal{C}_{\pi(P)}$. However, as showed in \cite[Section 4.1]{DaSilva}, the control set $\mathcal{C}_{\pi(P)}$ is contained in the line $\{0\}\times (v+\mathbb{R}\xi)$ implying that
    $$\mathcal{C}_P\subset \{0\}\times (v+\mathbb{R}\xi)\times S^1,\hspace{.5cm} P=((0, v), \phi),$$
    which implies the assertion.

    \bigskip

    2. If $\det A\neq 0$, the only control set with nonempty interior of $\Sigma_{SE(2)}$ is $\mathcal{C}_{SE(2)}$. Let $g_0\in\mathrm{int}\mathcal{C}_{SE(2)}$. Since $\pi$ conjugates the systems, 
    $$\pi^{-1}\left(\mathrm{int} \mathcal{C}_{SE(2)}\right)\cap \mathrm{int}\mathcal{O}^+(g_0, \phi_0)\neq\emptyset$$
    for some $\phi_0\in S^1$. If $(g_1, \phi_1)\in \pi^{-1}\left(\mathrm{int} \mathcal{C}_{SE(2)}\right)\cap \mathrm{int}\mathcal{O}^+(g_0, \phi_0)$, the exact controllability in the interior $\mathrm{int}\mathcal{C}_{SE(2)}$ assures the existence of $\tau>0$ and $u\in\mathcal{U}$ such that $\Phi^1(\tau, g_1, u)=g_0$ and hence,
    $$(g_0, \phi_2)=\Phi(\tau, (g_1, \phi_1), u)\in\Phi_{\tau, u}\left(\mathrm{int}\mathcal{O}^+(g_0, \phi_0)\right)\subset \mathrm{int}\mathcal{O}^+(g_0, \phi_0).$$
 Since the elements of finite order are dense in $S^1$, perturbing $\phi_2$ inside $\mathrm{int}\mathcal{O}^+(g_0, \phi_0)$ allow us to obtain $\phi_3\in S^1$ such that 
 $$(g_0, \phi_3)\in \mathrm{int}\mathcal{O}^+(g_0, \phi_0)\hspace{.5cm}\mbox{ and }\hspace{.5cm}n(\phi_3-\phi_0)=0,$$
for some $n\in \mathbb{N
}$. Let then $\tau'>0$ and $u'\in \mathcal{U}$ such that $(g_0, \phi_3)=\Phi(\tau', (g_0, \phi_0), u')$ and extend $u'$ to a $\tau'$-periodic function. By concatenation and formula (\ref{formulaconjugation}), it holds that
$$\Phi((k+1)\tau', (g_0, \phi_0), u')=(g_0, k(\phi_3-\phi_0)+\phi_0),\hspace{.5cm}\forall k\in\mathbb{N}$$
which implies
$$(g_0, \phi_0)=\Phi((n+1)\tau', (g_0, \phi_0), u')\in \mathrm{int}\mathcal{O}^+(g_0, \phi_0).$$
Again by formula (\ref{conjugation}), we conclude that 
$$(g_0, \phi)=f_{\phi-\phi_0}((g_0, \phi_0))\in f_{\phi-\phi_0}\left(\mathrm{int}\mathcal{O}^+(g_0, \phi_0)\right)=\mathrm{int}\mathcal{O}^+(g_0, \phi), \hspace{.5cm}\forall \phi\in S^1.$$
Hence, for any $\phi\in S^1$, there exists a control set $\mathcal{C}_{\phi}$ of $\Sigma_{G}$ with $(g_0, \phi)\in\mathrm{int}\mathcal{C}_{\phi}$ (see Remark \ref{existence}). Since $S^1$ is compact, the fiber $\{g_0\}\times S^1$ is contained in a finite number of control sets $\mathcal{C}_{\phi_i}, i=1, \ldots, m$. Since $S^1$ is also connected, the control sets $\mathcal{C}_{\phi_i}$ have to intersect, implying actually that $\mathcal{C}:=\mathcal{C}_{\phi_i}$ for any $i=1, \ldots, m$. In particular, we get 
$$\pi^{-1}(g_0)=\{g_0\}\times S_1\subset\mathrm{int}\mathcal{C},$$
which again by Proposition \ref{conjugation} implies that 
$$\mathcal{C}_{SE(2)}\times S^1=\pi^{-1}(\mathcal{C}_{SE(2)})=\mathcal{C},$$
showing the equality. Uniqueness follows directly from the uniqueness of $\mathcal{C}_{SE(2)}$ and the previous equality.
\end{proof}

The preceding result together with \cite[Theorem 4.1]{DaSilva} implies the following.

\begin{corollary}
      A linear control system $\Sigma_{G}$ on $G$ with linear vector field $\mathcal{X}=(\xi, A, a)$ is controllable if and only if it satisfies the LARC, $\det A\neq 0$ and $\mathrm{tr} A=0$.
\end{corollary}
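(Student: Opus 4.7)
My plan is to deduce the corollary directly from Theorem \ref{theomain} by reducing controllability of $\Sigma_G$ to controllability of the projected system $\Sigma_{SE(2)}$, and then invoking the characterization of controllability on $SE(2)$ given in \cite[Theorem 4.1]{DaSilva}.

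First I would handle necessity. Controllability of $\Sigma_G$ means that $G$ itself is the unique control set, hence a control set with nonempty interior. In particular the LARC must hold, because otherwise the orbits from generic points would be confined to a proper submanifold. Given the LARC, Theorem \ref{theomain} forces $\det A \neq 0$, since the case $\det A = 0$ only produces control sets with empty interior. Once $\det A\neq 0$, the same theorem identifies the unique control set with nonempty interior as $\mathcal{C}_{SE(2)}\times S^1$, and equating this with $G$ yields $\mathcal{C}_{SE(2)}=SE(2)$, i.e.\ controllability of $\Sigma_{SE(2)}$. The characterization in \cite[Theorem 4.1]{DaSilva} then gives $\mathrm{tr}\,A=0$.

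For sufficiency I would reverse this chain. Assuming $\Sigma_G$ satisfies the LARC, the previous lemma ensures that $\Sigma_{SE(2)}$ also satisfies the LARC. Combined with the hypotheses $\det A\neq 0$ and $\mathrm{tr}\,A=0$, the cited \cite[Theorem 4.1]{DaSilva} yields controllability of $\Sigma_{SE(2)}$, that is, $\mathcal{C}_{SE(2)}=SE(2)$. Plugging this back into Theorem \ref{theomain}(2), the unique control set of $\Sigma_G$ with nonempty interior becomes $\mathcal{C}_{SE(2)}\times S^1 = SE(2)\times S^1 = G$, which is exactly controllability of $\Sigma_G$.

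The proof is essentially a bookkeeping argument: there is no real obstacle because Theorem \ref{theomain} already does the heavy lifting of lifting the description of the control set from $SE(2)$ to $G$, and the lemma on the LARC identifies the LARC on the two systems up to the extra condition $a^2+\beta^2\neq 0$ (which is automatic once LARC on $\Sigma_G$ is assumed). The only point to be careful about is to keep the equivalences in the right direction: controllability of $\Sigma_G$ is equivalent to controllability of $\Sigma_{SE(2)}$ precisely because the $S^1$-fiber is completely filled in by the argument used in part 2 of Theorem \ref{theomain}.
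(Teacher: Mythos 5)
Your argument is correct and follows exactly the route the paper intends: the paper proves this corollary simply by combining Theorem \ref{theomain} with \cite[Theorem 4.1]{DaSilva}, which is precisely your reduction of controllability of $\Sigma_G$ to $\mathcal{C}_{SE(2)}\times S^1=G$, i.e.\ to controllability of $\Sigma_{SE(2)}$. Both directions are handled as the authors would, including the use of the LARC lemma to pass the rank condition down to $SE(2)$.
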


We would like to end our exposition commenting briefly about the set of singular points that might prevent completion in visual perception. Singularities usually refer to specific points in the functional architecture of the visual cortex where certain properties such as orientation preference change discontinuously or become undefined. These are particularly studied in terms of orientation maps in V1 which are often modeled using vector fields with singularities corresponding to zeros or discontinuities in these fields. This idea combines neuroscience, topology, and dynamical systems. Hence, the presence of our drift vector field $\mathcal{X}$ finds its concrete meaning in the dynamics we put on $G$ since it has a singularity at the identity point of $G$. Of course, none of the left-invariant vector fields (i.e. horizontal fields) on the system $\Sigma_G$ possess singular points. If we remember the equivalent definition of the drift $\mathcal{X}$ as in (\ref {normalizer})  we see that it does not belong to the horizontal layer but acts on horizontal vector fields so that it plays a structural role especially in scaling, symmetry, or as a modulatory field acting on the cortical geometry of the V1 structure

{\bf Acknowledgements} Adriano Da Silva's contribution to this work has been supported by Proyecto UTA Mayor Nº 4871-24.

{\bf Author Contributions} The authors contributed equally to the conceptualization of the research, the consequent analysis, and its writing up in the current form.

{\bf Declarations}
\par {\bf Conflict of interest} The authors declare no competing interests

\end{document}